\newtheoremstyle{SatzmitAbstand}
{10pt}    
{10pt}    
{\itshape}     
{25pt}    
{\bfseries} 
{:}     
{5pt} 
{}     
\theoremstyle{SatzmitAbstand}
\newtheorem{theorem}{Theorem}[section] 
\newtheorem{proposition}[theorem]{Proposition}
\newtheorem{corollary}[theorem]{Corollary}
\newtheorem{lemma}[theorem]{Lemma}
\newtheorem{remark}[theorem]{Remark}
\newtheorem{definition}[theorem]{Definition}
\newtheorem*{theorem*}{Theorem}
\newcommand{\KK}{\mathcal{K}}
\newcommand{\CC}{\mathbb{C}}
\newcommand{\RR}{\mathbb{R}}
\newcommand{\NN}{\mathbb{N}}
\begin{document}


\begin{center}
{\bf \Large The Blackadar-Handelman theorem for non-unital C*-algebras}
\end{center}

\vspace{0.3cm}

\begin{center}
{\it \large Henning Petzka}
\end{center}

\vspace{0.3cm}

\begin{abstract}
A well-known theorem of Blackadar and Handelman states that every unital stably finite C*-algebra has a bounded quasitrace. Rather strong generalizations of stable finiteness to the non-unital case can be obtained by either requiring the multiplier algebra to be stably finite, or alternatively requiring it to be at least stably not properly infinite. This paper deals with the question whether the Blackadar-Handelman result can be extended to the non-unital case with respect to these generalizations of stable finiteness.

For suitably well-behaved C*-algebras there is a positive result, but none of the non-unital versions holds in full generality. Two examples of C*-algebras are constructed. The first one is a non-unital, stably commutative C*-algebra $A$ that contradicts the weakest possible generalization of the Blackadar-Handelman theorem: The multiplier algebras of all matrix algebras over $A$ are finite, while $A$ has no bounded quasitrace.
The second example is a non-unital, simple C*-algebra $B$ that is stably non-stable, i.e. no matrix algebra over $B$ is a stable C*-algebra. In fact, the multiplier algebras over all matrix algebras of this C*-algebra are not properly infinite. Moreover, the C*-algebra $B$ has no bounded quasitrace and therefore gives a simple counterexample to a possible generalization of the Blackadar-Handelman theorem.
\end{abstract}

\section{Introduction}

In the classification theory of separable, exact C*-algebras there are a number of regularity properties of C*-algebras, the most prominent of which is absorption of the Jiang-Su algebra. Two strictly weaker properties are the corona factorization property, and a property introduced by R\o rdam in \cite{R6}, that is usually simply called `regularity'. The corona factorization property may be defined in several equivalent ways, one of which is to require each projection that is full in the multiplier algebra of the stabilization of the algebra in question to be properly infinite. R\o rdam's property requires that every full hereditary subalgebra with no non-zero bounded trace and no non-zero unital quotient must be stable. It is well known (see e.g. \cite{Ng}) that the latter property is at least as strong as the corona factorization property. Little is known about the converse (see for example \cite{Ng}), and this paper arose out of a study of that question.\\
 
\noindent In a survey (\cite{Ng}) on the corona factorization property, Ng defines another regularity property, which requires each full hereditary subalgebra with no non-zero bounded trace and no non-zero unital quotient to be only stably stable (i.e., it requires all sufficiently large matrix algebras over such a subalgebra to be stable). (If one matrix algebra is stable, then all larger matrix algebras are stable (\cite {R2} Proposition 2.1).) Ng shows (\cite{Ng} Proposition 4.2) that his property together with the corona factorization property is equivalent to having R\o rdam's `regularity' property. Hence any example of a C*-algebra with the corona factorization property, but not R\o rdam's property (if such an example exists), must fail to have Ng's property. Can there exist such a C*-algebra? To find one, by definition, we are looking for a non-unital (hereditary sub-)algebra with no non-zero bounded trace, and such that no matrix algebra over it is stable.\\

\noindent So what kind of criteria are there in general for the non-existence of (non-zero) bounded traces on a non-unital separable, exact, simple C*-algebra? 

Clearly, stability is a sufficient condition (\cite{HR}), but it is known not to be necessary. In \cite{R2} R\o rdam constructs a simple, separable, nuclear C*-algebra such that some matrix algebra over it is stable, but the algebra itself is not stable. This C*-algebra cannot have a bounded trace either, since such a trace would extend to a bounded trace on any matrix algebra. Is stability of sufficiently large matrix algebras the only obstruction to the existence of a (non-zero) bounded trace?

Let us consider the question of existence of bounded traces in the unital case. A famous result of Blackadar and Handelman (\cite{BH} II.4.11, cf.\ \cite{H} Theorem 2.4) says that a (non-zero) unital stably finite C*-algebra has a non-zero bounded quasitrace. Here, stable finiteness of the algebra is defined as stable finiteness of the unit as a projection in its stabilization (every multiple of the unit is finite). If we restrict our attention to exact C*-algebras, then by Haagerup's result \cite{Hp} quasitraces are traces and we can strengthen the Blackadar-Handelman theorem to having a non-zero bounded trace as its conclusion.

Is there a generalization of this result to the non-unital case? How would one have to define stable finiteness for a non-unital C*-algebra $C$? A rather weak way to do this is to ask for the unitization of $C$ to be stably finite. But this attempt is hopeless. The compact operators $\KK$ on a separable Hilbert space are stably finite in this definition, but there is no non-zero bounded trace on $\KK$ (\cite{HR}). A rather strong way is to call a non-unital C*-algebra stably finite, whenever its multiplier algebra has this property. Alternatively, instead of stable finiteness of the multiplier algebra, one could at least require that no matrix algebra over the multiplier algebra be properly infinite.\\

\noindent The question raised above, whether there exists a C*-algebra with the corona factorization property and not having R\o rdam's `regularity' property, is connected to the question of generalizing the Blackadar-Handelman theorem to the non-unital case. The non-existence of a non-unital C*-algebra as stipulated above (i.e., with no non-zero bounded trace and with no matrix algebra over it stable) would imply that, whenever a C*-algebra has no non-zero bounded trace, some matrix algebra over it has to be stable. But the multiplier algebra of a stable C*-algebra is properly infinite (\cite{R7}). So we have arrived at a proof of  both possible non-unital Blackadar-Handelman theorems. On the other hand, a counterexample to any of the non-unital Blackadar-Handelman theorems might provide us with a candidate for showing that R\o rdam's regularity property and the corona factorization property cannot coincide for exact C*-algebras.

It turns out that, while for suitably well behaved C*-algebras (for example in the sense of simple $\mathcal{Z}$-stable C*-algebras) the non-unital Blackadar-Handelman theorems hold, there are counterexamples in the general case. This paper contains the construction of two counterexamples. The first counterexample, given in Theorem \ref{nonsimpleexample}, is a stably commutative C*-algebra with no non-zero bounded trace, no non-zero unital quotient, and such that all matrix algebras over it have a finite multiplier algebra. The second counterexample, given in Theorem \ref{simpleexample}, is simple with no non-zero bounded trace, and such that no matrix algebra over it has a properly infinite multiplier algebra. (Since both algebras are exact there are also no quasitraces by Haagerup"s result.) In particular, both algebras constructed are non-unital C*-algebras without non-zero bounded trace, and such that no matrix algebra over them is stable; -- they are stably non-stable with no non-zero bounded trace. 

But the C*-algebras constructed do not have the corona factorization property either.  So, despite being counterexamples to any extension of  the Blackadar-Handelman theorem to the non-unital case, they fail to settle the question, raised by Ng in \cite{Ng}, of comparing the corona factorization property with R\o rdam's property. \\

\section{Preliminaries}\label{preliminaries}

We shall denote Murray-von Neumann equivalence of projections $p,q$ in a C*-algebra $A$ as usual by $p\sim q$, i.e., we write $p\sim q $ if there exists some partial isometry $v\in A$ such that $p=v^*v$ and $q=vv^*$. We shall write $p\preceq q$, and say that $p$ is subequivalent to the projection $q$ if $p$ is equivalent to a subprojection of $q$, i.e., if there exists a projection $p_0\in A$ such that $p\sim p_0\leq q$.

Finiteness and infiniteness of projections are defined in terms of subequivalence:

\begin{definition}\label{definfinite}
A projection $p$ in a C*-algebra $A$ is called finite, if $$p\sim q\leq p\mbox{ implies }q=p.$$
$p$ is called infinite if it is not finite. 
\end{definition}

\begin{definition}\label{stablyfinite}
A unital C*-algebra is called finite, if its unit is a finite projection. The C*-algebra is called stably finite if any multiple of the unit is a finite projection in $A\otimes\KK$.
\end{definition}

We will now recall the definition of traces and quasitraces.

\begin{definition}\label{deftrace}
A trace on a C*-algebra $A$ is a positive linear functional $\tau:A\rightarrow \CC$ such that $\tau(ab)=\tau(ba)$ for all $a,b\in A$. A trace on a unital C*-algebra is called a state if $\tau(1_A)=1$.
\end{definition}

\begin{definition}\label{defquasitrace}\cite{BH}
 A quasitrace is a continuous function $\tau:A^+\rightarrow\RR^+$ such that 
\begin{itemize}
\item $\tau(\lambda a)=\lambda \tau(a)$ for all $a\in A^+$ and all $\lambda\in\RR$,
\item $\tau(a+b)=\tau(a)+\tau(b)$ for all commuting elements $a,b\in A^+$,
\item $\tau(x^*x)=\tau(xx^*)$ for all $x\in A$, and such that 
\item $\tau$ extends to a map $\tau:M_2(A)^+\rightarrow \RR^+$ with the same properties. 
\end{itemize}
\end{definition}

Haagerup proved in \cite{Hp} that for unital exact C*-algebras quasitraces extend to traces. His result was extended to the non-unital case by Kirchberg in \cite{Ki}. In the following we will only work with exact C*-algebras and all results will consider traces.

We will say that a trace or quasitrace is bounded, if it is bounded in norm. Traces, defined as above, are automatically continuous and bounded (\cite{D} 2.1.8).\\

\noindent One of the main ingredients of our study is the multiplier algebra. For any C*-algebra $A$ there is (see e.g. \cite{L}) a unital C*-algebra $\mathcal{M}(A)$, unique up to isomorphism, such that
\begin{itemize}
\item[(a)] $\mathcal{M}(A)$ contains $A$ as an ideal,
\item[(b)] every ideal of $\mathcal{M}(A)$ has non-zero intersection with $A$, and
\item [(c)] $\mathcal{M}(A)$ is maximal in the sense that every unital C*-algebra satisfying  $(a)$ and $(b)$ embeds into $\mathcal{M}(A)$.
\end{itemize}
$\mathcal{M}(A)$ is called the multiplier algebra of $A$. If $A$ is itself unital, then $\mathcal{M}(A)=A$. For further details on multiplier algebras consult for example \cite{L}.\\

\noindent The corona factorization property, defined by Kucerovsky and Ng in \cite{KN2}, is a regularity property for C*-algebras.  It originated from the study of absorbing extensions \cite{EK}. There have been several papers on the corona factorization property and related concepts since (see for example \cite{R2}, \cite{R3}, \cite{KN}, \cite{KN3}, \cite{Ng}, \cite{OPR1}, \cite{OPR2}, \cite{BRTTW}, \cite{HRW}). It turns out that the corona factorization property can be defined in several equivalent ways, the most common one is to require for the stabilization of the given algebra that every full multiplier projection is properly infinite. (Recall that an element $a$ of a C*-algebra $A$ is called full if the closed two-sided ideal generated by $a$ is all of $A$.) It was shown by Kucerovsky and Ng in \cite{KN2} that this is equivalent to the following statement that we use as the definition.

\begin{definition}
Let $A$ be a separable C*-algebra. Then $A$ is said to have the corona factorization property, briefly the CFP, if whenever $D$ is a full hereditary subalgebra of $A\otimes \KK$ such that some (non-zero) matrix algebra $M_n(D)$ is stable, then $D$ is stable.
\end{definition}

There is a second regularity property that usually goes along with the study of the CFP. This property, introduced by R\o rdam in  \cite{R4}, is usually simply called  `regularity', but one might call it the tracial version of the corona factorization property.

\begin{definition}\label{defregular}
A C*-algebra is regular -- we shall also say that it has the tracial corona factorization property -- if every full hereditary subalgebra $D$ of $A\otimes \KK$ with no non-zero unital quotient and no non-zero bounded trace is stable.
\end{definition}

It is fairly easy to see that the tracial CFP implies the CFP (see e.g. \cite{Ng} Lemma 4.5)

\begin{proposition}\label{regularimpliescfp}
Let $A$ be a C*-algebra with the tracial CFP. Then $A$ has the CFP.
\end{proposition}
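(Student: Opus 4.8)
The plan is to argue by contraposition, starting from the definitions: the CFP says every full hereditary subalgebra $D\subseteq A\otimes\KK$ with some matrix algebra $M_n(D)$ stable is itself stable, while the tracial CFP says every full hereditary subalgebra $D\subseteq A\otimes\KK$ with no non-zero unital quotient and no non-zero bounded trace is stable. So let $D$ be a full hereditary subalgebra of $A\otimes\KK$ for which some $M_n(D)$ is stable; to verify the hypothesis of the CFP I must show $D$ is stable, and by the tracial CFP it suffices to show $D$ has no non-zero unital quotient and no non-zero bounded trace.

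First I would rule out a non-zero bounded trace on $D$. If $\tau$ were such a trace, it would induce a non-zero bounded trace on $M_n(D)\cong D\otimes M_n$ via $\tau\otimes\mathrm{Tr}_n$ (finite-dimensional amplification preserves boundedness). But $M_n(D)$ is stable, and a stable C*-algebra admits no non-zero bounded trace: on $D\otimes\KK$ one has $e_{11}\otimes 1 \sim \sum_{k=1}^{N} e_{kk}\otimes 1$ for every $N$, so any bounded trace would force $\tau(e_{11}\otimes\cdot)$ to vanish, hence $\tau=0$ — this is the standard fact cited as \cite{HR} in the introduction. Thus $D$ has no non-zero bounded trace.

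Next I would rule out a non-zero unital quotient. Suppose $D\twoheadrightarrow E$ with $E$ unital and non-zero. Then $M_n(D)\twoheadrightarrow M_n(E)$, and $M_n(E)$ is unital and non-zero. But a stable C*-algebra has no non-zero unital quotient: if $C$ is stable and $C\twoheadrightarrow F$ with $F$ unital, lift the unit to a positive contraction, use stability $C\cong C\otimes\KK$ to find inside $C$ an infinite orthogonal family of mutually equivalent projections mapping onto the unit of $F$, contradicting that the unit of $F$ is (in particular) a full properly infinite-free obstruction — more cleanly, the image of an approximate unit of $C$ cannot converge to a unit because stability gives arbitrarily large orthogonal ``shifted copies''. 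Since $M_n(D)$ is stable, it has no non-zero unital quotient, contradicting $M_n(E)\neq 0$. Hence $D$ has no non-zero unital quotient either.

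With both obstructions eliminated, the tracial CFP applies and yields that $D$ is stable, which is exactly what the CFP requires. The only mildly delicate points are the two ``stable C*-algebras have no $\ldots$'' lemmas; both are classical (the trace statement is \cite{HR}, the unital-quotient statement follows similarly from $C\cong C\otimes\KK$ and the shift), so I would simply cite them rather than reprove them. I do not expect any real obstacle here — the content is entirely in matching up the quantifiers in the two definitions and using that stability passes to $M_n$ and to quotients. As the excerpt itself notes, this is the easy direction (cf.\ \cite{Ng} Lemma 4.5).
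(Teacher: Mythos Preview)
Your argument is correct and is precisely the standard one: given a full hereditary $D\subseteq A\otimes\KK$ with $M_n(D)$ stable, you feed $D$ into the tracial CFP by noting that a bounded trace or a unital quotient of $D$ would amplify to one on $M_n(D)$, which is impossible for a stable algebra. The paper itself does not spell out a proof but simply cites \cite{Ng}, Lemma~4.5, for exactly this reasoning; your write-up is just an unpacking of that citation. (One cosmetic point: your sketch for ``stable $\Rightarrow$ no non-zero unital quotient'' via orthogonal shifted copies is a bit loose --- the clean way is that quotients of stable C*-algebras are stable and non-zero stable C*-algebras are non-unital --- but you already say you would cite rather than reprove this, which is appropriate.)
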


It is an open question whether the converse is true, i.e., whether the CFP and the tracial CFP are in fact equivalent properties for any C*-algebra. \\

\noindent In his survey (\cite{Ng}) on the corona factorization property, Ng introduces a new property, that he calls asymptotic regularity in the style of R\o rdam's property `regularity'. To continue with our revised terminology, let us call it the asymptotic tracial CFP. 

\begin{definition}\label{defasyregular}
A C*-algebra $A$ has the asymptotic tracial corona factorization property if, whenever $D$ is a full hereditary subalgebra of $A\otimes \KK$ with no non-zero unital quotient and no non-zero bounded race, there is some natural number $n\geq 1$ such that $M_n(D)$ is stable.
\end{definition}

Ng then proves the following result:

\begin{proposition}\label{regularequalscfppulsasycfp}\cite{Ng}
A separable C*-algebra has the tracial CFP if, and only if, it has both the CFP and the asymptotic tracial CFP.
\end{proposition}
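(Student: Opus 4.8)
\textbf{Proof strategy for Proposition \ref{regularequalscfppulsasycfp}.}

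The plan is to prove both implications directly from the definitions, using Proposition \ref{regularimpliescfp} for one direction. For the forward implication, suppose $A$ has the tracial CFP. Then by Proposition \ref{regularimpliescfp} it has the CFP, so it remains to check the asymptotic tracial CFP. But this is immediate: if $D$ is a full hereditary subalgebra of $A\otimes\KK$ with no non-zero unital quotient and no non-zero bounded trace, then the tracial CFP says $D$ itself is stable, so we may take $n=1$ in Definition \ref{defasyregular}.

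For the reverse implication, suppose $A$ has both the CFP and the asymptotic tracial CFP, and let $D$ be a full hereditary subalgebra of $A\otimes\KK$ with no non-zero unital quotient and no non-zero bounded trace. By the asymptotic tracial CFP there is some $n\geq 1$ with $M_n(D)$ stable. I would like to apply the CFP to conclude that $D$ is stable, but the CFP as stated in the excerpt requires $D$ to be a full hereditary subalgebra of $A\otimes\KK$ and some matrix algebra $M_n(D)$ to be stable -- which is exactly what we have. Since $D$ is full in $A\otimes\KK$ and $M_n(D)$ is stable, the CFP gives that $D$ is stable. As $D$ was an arbitrary full hereditary subalgebra with no non-zero unital quotient and no non-zero bounded trace, $A$ has the tracial CFP.

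The only subtlety -- and the step I would expect to need the most care -- is verifying that $M_n(D)$ genuinely meets the hypotheses of the CFP, i.e.\ that one can legitimately invoke the CFP for the pair $(D, M_n(D))$. Here $D$ being a full hereditary subalgebra of $A\otimes\KK$ is given, and $M_n(D)$ being stable is given, so the CFP applies verbatim. One should also note the standard fact (cited in the excerpt from \cite{R2} Proposition 2.1) that once $M_n(D)$ is stable all larger matrix algebras are stable, which makes the choice of $n$ in the asymptotic tracial CFP harmless. Separability of $A$ is used only insofar as it appears in the hypotheses of the CFP definition, and it is inherited appropriately by the subalgebras in question. Putting the two implications together yields the equivalence.
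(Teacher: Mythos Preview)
Your proof is correct and is precisely the natural argument from the definitions. Note, however, that the paper itself does not give a proof of this proposition: it is quoted from \cite{Ng} (Proposition 4.2 there) without argument. Your write-up is essentially what one finds in Ng's paper, so in this sense your approach coincides with the original source's proof.
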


It follows that if the tracial CFP is strictly stronger than the CFP, and if $A$ is an example of a C*-algebra with the CFP but without the tracial CFP, then $A$ cannot have the asymptotic tracial CFP.  For such an example, by definition, we are looking for a (hereditary sub-)C*-algebra $D$, with no non-zero bounded trace, no non-zero unital quotient, and such that no matrix algebra over $D$ is stable. Such a C*-algebra is given in Corollary \ref{stablynonstableexample}. But this example does not satisfy the corona factorization property either (\cite{KN3} Proposition 5.3), so it is of no help in deciding whether the CFP and the tracial CFP are equivalent.\\

\noindent In the classification program one of the central regularity properties is tensorial absorption of the Jiang-Su algebra, denoted by $\mathcal{Z}$, that was constructed in \cite{JS}. A C*-algebra $A$ that absorbs the Jiang-Su algebra tensorially, i.e., such that $A\otimes\mathcal{Z}\cong A$, is called $\mathcal{Z}$-stable. 

It can be seen from Theorem 3.6 of \cite{HRW} together with the fact that the Cuntz semigroup of any $\mathcal{Z}$-stable C*-algebra is almost unperforated (\cite{R5}) that every $\mathcal{Z}$-stable C*-algebra has the tracial corona factorization property, and hence the corona factorization property. (The latter can also be seen directly from Corollary 3.5 of \cite{HRW}.) 

The converse is not true: Kucerovsky and Ng show in \cite{KN1} that certain Villadsen algebras of the second type (\cite{V2}) do have the tracial corona factorization property, while perforation in their $K_0$-groups shows that these algebras cannot be $\mathcal{Z}$-stable.

\section{Motivation}

We are interested in a generalization of the following well-known theorem by Blackadar and Handelman (\cite{BH} II.4.11, cf. \cite{H} Theorem 4.2):

%
%

\begin{theorem}\label{blackadarhandelman}
Every unital stably finite C*-algebra admits a quasitrace.
\end{theorem}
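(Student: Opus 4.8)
The plan is to reduce the existence of a quasitrace on a unital stably finite C*-algebra $A$ to a Hahn--Banach-type separation argument on the algebraic side, phrased in terms of the Cuntz semigroup or, more classically, in terms of the dimension semigroup of projections over $A$. First I would recall that a bounded quasitrace on $A$ is the same datum as a normalized dimension function, i.e. a state on the ordered abelian semigroup $W(A)$ (or at least on the subsemigroup $V(A)$ generated by projections in $M_\infty(A)$), which in turn corresponds to an additive, order-preserving, $\RR^+$-valued functional that does not vanish identically. So the task is to produce such a state. The standard route, following Blackadar--Handelman, is to consider the (pre-ordered) Grothendieck-type group $K_0(A)$ together with the positive cone coming from $M_\infty(A)^+$, together with the order unit $[1_A]$, and to show that stable finiteness forces this ordered group to admit a state (a positive homomorphism to $\RR$ sending $[1_A]$ to $1$).

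The key steps, in order, would be: (1) Show that stable finiteness of $1_A$ implies the pre-ordered semigroup $V(A)$ is \emph{cancellative enough} in the relevant sense, or more precisely that the order unit $[1_A]$ is not ``infinitesimally dominated'' --- concretely, that there is no sequence of projections forcing $n[1_A] \preceq m[1_A]$ with $n > m$ in the appropriate matrix algebra, which is exactly the failure that stable infiniteness would produce. (2) Pass to the ordered vector space: form the quotient/enveloping ordered $\RR$-vector space of $V(A)$, with the image of $[1_A]$ as order unit; stable finiteness guarantees this is a nonzero ordered vector space with an order unit (the cone does not ``swallow'' the unit). (3) Invoke the Goodearl--Handelman existence theorem for states on ordered abelian groups with order unit (equivalently a Hahn--Banach argument): any such nonzero ordered vector space with order unit admits a state. (4) Pull this state back to a function on $A^+$ and verify it satisfies the quasitrace axioms of Definition \ref{defquasitrace}, including the crucial requirement that it extends compatibly to $M_2(A)^+$ --- this is automatic because we built it on the whole of $M_\infty(A)$ from the start. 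One must also handle elements of $A^+$ that are not (scalar multiples of) projections; here one approximates using the functional calculus and the fact that a dimension function on projections extends to a lower-semicontinuous dimension function on $A^+$, or one works directly with the Cuntz semigroup $W(A)$ in place of $V(A)$ throughout.

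The main obstacle, and the heart of the Blackadar--Handelman argument, is step (1)--(2): translating the \emph{a priori} rather weak hypothesis ``every $n \cdot 1_A$ is a finite projection in $A \otimes \KK$'' into the \emph{positivity} statement needed to run Hahn--Banach, namely that $[1_A]$ lies outside the closure of the negative cone, or equivalently that $k[1_A] \not\leq (k-1)[1_A]$ in $V(A)$ for all $k$. Proving this requires care: one must rule out not only literal equivalences but also the subtler possibility of approximate or ``stable'' domination arising from almost-projections, and it is here that one typically needs a perforation-type estimate or an appeal to the structure of $V(A)$ under the finiteness hypothesis. A secondary technical point is ensuring the resulting functional is genuinely \emph{nonzero} --- this follows because the state sends the order unit to $1 \neq 0$ --- and that it is norm-bounded, which for a state on an order-unit space is automatic with bound equal to the value on the unit.

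Finally, I would remark that since we are in the setting where all algebras under consideration are exact, Haagerup's theorem (\cite{Hp}, and \cite{Ki} in the non-unital case, as recalled in the excerpt) upgrades the quasitrace produced here to an honest bounded trace, so in the exact case the conclusion can be stated with ``trace'' in place of ``quasitrace''; but the theorem as stated needs only the quasitrace version, for which exactness is not required.
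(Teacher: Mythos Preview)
The paper does not give a proof of this theorem at all: Theorem \ref{blackadarhandelman} is quoted from the literature with the citation ``(\cite{BH} II.4.11, cf.\ \cite{H} Theorem 2.4)'' and no argument is supplied. It functions purely as background, motivating the non-unital questions that the paper actually addresses. So there is no ``paper's own proof'' to compare your proposal against.

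As for the proposal itself: the broad architecture you describe---stable finiteness $\Rightarrow$ the ordered semigroup $V(A)$ with order unit $[1_A]$ satisfies the hypotheses of the Goodearl--Handelman state existence theorem $\Rightarrow$ a state on $(K_0(A),[1_A])$ $\Rightarrow$ a dimension function $\Rightarrow$ a quasitrace---is indeed the skeleton of the Blackadar--Handelman argument. However, you have misidentified where the real difficulty lies. Your steps (1)--(3) are comparatively routine: stable finiteness directly gives that $n[1_A]\leq m[1_A]$ forces $n\leq m$, so $[1_A]$ is a genuine order unit in a nontrivially ordered group, and the Goodearl--Handelman theorem then hands you a state. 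The hard part is your step (4), which you dismiss in a sentence (``one approximates using the functional calculus''). Passing from a state on projections---or even from a dimension function on the Cuntz semigroup---to a function on $A^+$ satisfying all the axioms of Definition \ref{defquasitrace}, in particular additivity on \emph{commuting} pairs and the $M_2$-extension, is the substantial analytic content of \cite{BH}; it is not a matter of approximation by projections, and in projectionless algebras there may be none to approximate with. A complete proof has to either go through the AW*-enveloping algebra machinery of \cite{BH} and \cite{H}, or through the later Cuntz-semigroup formulation where lower semicontinuous dimension functions are shown to correspond to (lower semicontinuous) 2-quasitraces. Your sketch does not engage with this.
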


The following related result is Theorem 2.5 of \cite{R7}. As the author, M. R\o rdam points out, the theorem is essentially due to Goodearl and Handelman (\cite{GH}) (and Haagerup's contribution is the passage from quasitraces to traces (\cite{Hp})).

\begin{theorem}\label{goodearlhandelmanhaagerup}
The following conditions are equivalent for an exact \underline{unital} C*-algebra:
\begin{itemize}
\item[(i)] $A$ has a non-zero, stably finite quotient.
\item[(ii)] $M_n(A)$ is not properly infinite for any (non-zero) $n\in \NN$.
\item[(iii)] $A$ admits a tracial state.
\end{itemize} 
\end{theorem}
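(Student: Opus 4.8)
The plan is to treat the three conditions as a cycle, proving $(i)\Leftrightarrow(iii)$ and $(iii)\Rightarrow(ii)$ directly, and reserving the real work for $(ii)\Rightarrow(iii)$, which is where Theorem~\ref{blackadarhandelman} (in its Goodearl--Handelman-refined form) and Haagerup's theorem \cite{Hp} enter. The exactness hypothesis is used only to pass from quasitraces to traces, and unitality is used throughout so that ``stably finite'' makes sense and quasitraces can be normalised.

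First I would dispatch the easy implications. For $(iii)\Rightarrow(i)$: given a tracial state $\tau$ on $A$, set $I=\{a\in A:\tau(a^*a)=0\}$; Cauchy--Schwarz makes $I$ a closed left ideal, and the identity $\tau(a^*a)=\tau(aa^*)$ makes it a two-sided $*$-ideal, which is proper because $\tau(1_A)=1$. Then $\tau$ descends to a faithful tracial state on $A/I\neq 0$, and a unital C*-algebra with a faithful trace is stably finite: if $p\sim q\leq p$ in a matrix algebra, applying the (faithful) amplified trace to $p-q\geq 0$ forces $p=q$. For $(i)\Rightarrow(iii)$: if $A/I$ is a non-zero stably finite quotient, Theorem~\ref{blackadarhandelman} yields a non-zero quasitrace on $A/I$, which by exactness and \cite{Hp} is a trace; normalising and composing with $A\to A/I$ gives a tracial state on $A$. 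For $(iii)\Rightarrow(ii)$: a tracial state $\tau$ extends to a trace $\tau_n$ on $M_n(A)$ with $\tau_n(1_{M_n(A)})=n$; were $M_n(A)$ properly infinite there would be isometries $s_1,s_2\in M_n(A)$ with $s_1s_1^*\perp s_2s_2^*$, whence $2n=\tau_n(s_1^*s_1)+\tau_n(s_2^*s_2)=\tau_n(s_1s_1^*+s_2s_2^*)\leq\tau_n(1_{M_n(A)})=n$, which is absurd.

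The substantial implication---and the only place I expect real difficulty---is $(ii)\Rightarrow(iii)$; it genuinely exceeds the contrapositive of Theorem~\ref{blackadarhandelman}, as the Toeplitz algebra satisfies $(ii)$ without being stably finite. I would run the Goodearl--Handelman argument \cite{GH}. Condition $(ii)$ says precisely that, in the ordered monoid $V(A)$ of Murray--von Neumann classes of projections over $A$, the order unit $u=\langle 1_A\rangle$ satisfies $(k+1)u\not\leq ku$ for every $k\geq 1$: indeed $M_k(A)$ being properly infinite is exactly the relation $2ku\leq ku$, and translation invariance of the order on $V(A)$ shows that such a relation holds for some $k$ iff $(m+1)u\leq mu$ holds for some $m$. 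By the Goodearl--Handelman state-existence theorem for ordered abelian monoids with order unit, $V(A)$ then admits a state normalised at $u$. The delicate step is to promote this abstract monoid state to a lower semicontinuous dimension function on $A$, and thence, via the Blackadar--Handelman correspondence, to a non-zero bounded quasitrace on $A$; this is the heart of \cite{GH} (equivalently \cite[Theorem~2.5]{R7}), and I would invoke it rather than reconstruct it. Finally, exactness together with Haagerup's theorem \cite{Hp} converts this quasitrace into a trace, which after normalisation is the desired tracial state. Combined with the easy implications above, this gives all three equivalences.
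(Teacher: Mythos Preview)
The paper does not give its own proof of this theorem: it is quoted verbatim as Theorem~2.5 of \cite{R7}, with the remark that the result is essentially due to Goodearl--Handelman \cite{GH} together with Haagerup's theorem \cite{Hp}. Your sketch is correct and faithfully reproduces the standard argument behind that citation---the cycle $(i)\Leftrightarrow(iii)$ and $(iii)\Rightarrow(ii)$ are routine, and your treatment of $(ii)\Rightarrow(iii)$ via the Goodearl--Handelman state-existence criterion on $V(A)$ (the condition $(k+1)u\not\leq ku$ for all $k$), followed by the Blackadar--Handelman passage from dimension functions to quasitraces and then Haagerup, is exactly the route indicated by the paper's attribution. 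One small point worth making explicit in your $(i)\Rightarrow(iii)$ step: you need that exactness passes to quotients so that Haagerup applies to $A/I$; this is a theorem of Kirchberg and is standard, but should be mentioned.
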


Trivially, we get from this the equivalence of the following two statements  for a unital exact C*-algebra:
\begin{itemize}
\item $M_n(\mathcal{M}(A))$ is not properly infinite for any $n\in \NN$.
\item $A$ admits a non-zero bounded trace.
\end{itemize}
This leads to the question of a possible generalization to the non-unital case. Firstly, because in the unital setting existence of a trace is by the previous result connected to non-proper infinitness, one could hope for the following to be true:

The following statements are equivalent for an exact C*-algebra:
\begin{itemize}
\item $M_n(\mathcal{M}(A))$ is not properly infinite for any $n\in \NN$.
\item $A$ admits a non-zero bounded trace.
\end{itemize}

Secondly, after defining stable finiteness for non-unital C*-algebras as follows,
\begin{definition}\label{defstablyfinitefornonunital}
A non-unital C*-algebra is called stably finite, if its multiplier algebra is a stably finite algebra, i.e., the multiplier unit is a stably finite projection,
\end{definition}
\hspace{-15pt}one could hope for the following non-unital version of the Blackadar-Handelman theorem to be true:

\begin{center}Every stably finite exact C*-algebra admits a bounded trace.\end{center}

Theorem \ref{nonsimpleexample} and Theorem \ref{simpleexample} show that both of these non-unital statements (true in the unital case) are false in general. (Of course, it is enough to show that the second is false.) We will construct a simple C*-algebra, such that no matrix algebra over its multiplier algebra is properly infinite, but which has no non-zero bounded trace (Theorem \ref{simpleexample}).

 We will also construct a C*-algebra stably isomorphic to a commutative C*-algebra with the same properties (Theorem \ref{nonsimpleexample}). Theorem \ref{nonsimpleexample} shows that even the non-unital version of the original Blackadar-Handelman Theorem does not hold:

The non-simple C*-algebra without bounded trace constructed in this theorem is stably finite (in the sense of Definition \ref{defstablyfinitefornonunital}).\\

\noindent The counterexamples constructed are exact, so the algebras do not admit bounded quasitraces either by \cite{Hp}, despite being stably finite in a strong sense.

\section{Well-behaved C*-algebras}

For `well-behaved' C*-algebras (interpreted differently in the hypothesis of the following theorem and in one of its corollaries) we get positive results.

\begin{proposition}\label{asycfpisgood}
Every stably finite, exact C*-algebra $A$ with the asymptotic tracial corona factorization property and with the property that no matrix algebra over any non-zero unital quotient of $A$ is properly infinite, admits a non-zero bounded trace.
\end{proposition}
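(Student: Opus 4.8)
The plan is to reduce the statement to the unital Blackadar--Handelman situation captured in Theorem~\ref{goodearlhandelmanhaagerup}, using the asymptotic tracial CFP to manufacture a non-zero bounded trace on $A$ out of a tracial state on a suitable unital quotient of a matrix algebra over a hereditary subalgebra. First I would dispose of the trivial case: if $A$ itself is zero there is nothing to prove, so assume $A \neq 0$. Since $A$ is stably finite in the sense of Definition~\ref{defstablyfinitefornonunital}, its multiplier algebra $\mathcal{M}(A)$ is stably finite, hence in particular $\mathcal{M}(A)$ is not properly infinite and neither is any $M_n(\mathcal{M}(A))$; this will be the source of obstructions to stability.

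Next I would set $D = A \otimes \KK$ (or a full hereditary subalgebra thereof; note $A$ is always a full hereditary subalgebra of $A \otimes \KK$ in the natural corner) and argue by contradiction: suppose $A$ has no non-zero bounded trace. A bounded trace on $A$ extends to one on $A \otimes \KK$ and restricts to any full hereditary subalgebra, and conversely a bounded trace on a full hereditary subalgebra of $A \otimes \KK$ gives a densely-defined trace that one would want to push back to $A$; so the no-bounded-trace hypothesis should propagate to $D$. Combined with the hypothesis that no matrix algebra over any non-zero unital quotient of $A$ is properly infinite -- which should transfer to the statement that $D$ has no non-zero unital quotient, since a unital quotient of $D = A\otimes\KK$ would force, via the structure of quotients of stabilizations, a unital quotient of $A$ on which some matrix algebra is properly infinite (a unital quotient of a stable-ish algebra tends to be properly infinite) -- we get that $D$ satisfies the hypotheses in Definition~\ref{defasyregular}. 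The asymptotic tracial CFP then yields some $n \geq 1$ with $M_n(D)$ stable.

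Now the contradiction: $M_n(D) = M_n(A \otimes \KK) \cong A \otimes \KK$ being stable means, by the result of R\o rdam cited in the introduction (\cite{R7}), that its multiplier algebra $\mathcal{M}(A \otimes \KK)$ is properly infinite. But $\mathcal{M}(A)$ embeds unitally into $\mathcal{M}(A\otimes\KK)$ in a corner, and more to the point stable finiteness of $\mathcal{M}(A)$ in Definition~\ref{defstablyfinitefornonunital} is precisely the statement that the multiplier unit is a stably finite projection in $\mathcal{M}(A)\otimes\KK \cong \mathcal{M}(A\otimes\KK)$ (using that the stabilization of $\mathcal{M}(A)$ sits inside $\mathcal{M}(A\otimes\KK)$), which rules out $\mathcal{M}(A\otimes\KK)$ being properly infinite. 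This contradiction forces $A$ to have a non-zero bounded trace.

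The main obstacle I expect is bookkeeping around \emph{which} full hereditary subalgebra to run the argument on and making the ``no non-zero unital quotient'' and ``no non-zero bounded trace'' hypotheses transfer cleanly between $A$, its matrix algebras, and full hereditary subalgebras of $A \otimes \KK$ -- in particular, carefully translating ``no matrix algebra over any non-zero unital quotient of $A$ is properly infinite'' into the hypothesis Definition~\ref{defasyregular} actually needs, and confirming that the relevant identifications $M_n(A\otimes\KK)\cong A\otimes\KK$ and $\mathcal{M}(A)\otimes\KK \hookrightarrow \mathcal{M}(A\otimes\KK)$ interact correctly with stable finiteness of the multiplier unit. The trace-extension/restriction step (a bounded trace survives passage to and from $A \otimes \KK$) is standard but should be stated precisely so the contradiction is airtight.
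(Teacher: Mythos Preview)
Your overall strategy (argue by contradiction, invoke the asymptotic tracial CFP, and obtain a stable matrix algebra whose multiplier algebra is properly infinite in contradiction to stable finiteness) is exactly the paper's. However, your choice $D = A\otimes\KK$ breaks the argument, and the patch you propose rests on a false identification.

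The point is that $A\otimes\KK$ is \emph{already} stable, so the conclusion ``$M_n(D)$ is stable for some $n$'' is vacuous for $D=A\otimes\KK$; moreover $\mathcal{M}(A\otimes\KK)$ is \emph{always} properly infinite for $A\neq 0$, so this cannot contradict anything. Your attempted rescue, namely $\mathcal{M}(A)\otimes\KK \cong \mathcal{M}(A\otimes\KK)$, is simply false: the right-hand side is unital while the left-hand side is not, and in general $\mathcal{M}(A\otimes\KK)$ is vastly larger than anything built from $\mathcal{M}(A)$. So there is no route from proper infiniteness of $\mathcal{M}(A\otimes\KK)$ back to a failure of stable finiteness of $\mathcal{M}(A)$.

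The fix, which you parenthetically gesture at, is to take $D=A$ itself, viewed as a full hereditary subalgebra of $A\otimes\KK$. Then the argument runs cleanly: assuming $A$ has no non-zero bounded trace, one first checks that $A$ has no non-zero unital quotient, because such a quotient would (by the hypothesis on its matrix algebras together with Theorem~\ref{goodearlhandelmanhaagerup}) carry a tracial state, which pulls back to a non-zero bounded trace on $A$. Now the asymptotic tracial CFP gives $M_n(A)$ stable for some $n$, whence $\mathcal{M}(M_n(A))\cong M_n(\mathcal{M}(A))$ is properly infinite, contradicting stable finiteness of $\mathcal{M}(A)$. This is the paper's proof; note in particular that the role of the ``no matrix algebra over any unital quotient is properly infinite'' hypothesis is precisely to feed into Theorem~\ref{goodearlhandelmanhaagerup} and rule out unital quotients, not to argue about quotients of $A\otimes\KK$ as you suggest (quotients of $A\otimes\KK$ are stable and hence automatically non-unital).
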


\begin{proof}
Suppose that $A$ has the properties of the hypothesis and assume $A$ has no non-zero bounded trace. Then $A$ has no unital quotient either, because Theorem \ref{goodearlhandelmanhaagerup} applied to this unital C*-algebra would give a tracial state on a quotient, and so a non-zero bounded trace on $A$. Now the asymptotic tracial CFP implies stability of $M_n(A)$ for some $n\in\NN$. But then $\mathcal{M}(M_n(A))$ is properly infinite (\cite{R7}, Lemma 3.4), which contradicts the hypothesis. 
\end{proof}

\begin{remark}
Note that there exist finite non-unital C*-algebras with unital purely infinite quotients (Example 2.4 of \cite{R7}). But it is possible that such algebras can not be stably finitie in the sense of Definition \ref{defstablyfinitefornonunital} and that the last assumption of the proposition is redundant.
\end{remark}

\begin{corollary}
Every simple, stably finite, exact C*-algebra with the asymptotic tracial corona factorization property admits a non-zero bounded trace.
\end{corollary}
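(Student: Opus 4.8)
The plan is to derive the corollary directly from Proposition~\ref{asycfpisgood}. That proposition already gives the conclusion for stably finite, exact C*-algebras with the asymptotic tracial CFP, under the extra assumption that no matrix algebra over any non-zero unital quotient of $A$ is properly infinite; so it suffices to verify that a simple $A$ automatically satisfies this extra assumption.

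First I would use simplicity: a simple C*-algebra has only the trivial closed two-sided ideals, so its only quotients are $0$ and $A$, and therefore $A$ admits a non-zero unital quotient only if $A$ itself is unital, in which case the quotient in question is $A$. This splits the argument into two cases. If $A$ is non-unital, then $A$ has no non-zero unital quotient at all, and the hypothesis concerning unital quotients in Proposition~\ref{asycfpisgood} is vacuously satisfied. If $A$ is unital, then $\mathcal{M}(A)=A$, so that Definition~\ref{defstablyfinitefornonunital} for $A$ coincides with Definition~\ref{stablyfinite}; hence stable finiteness of $A$ says precisely that every $M_n(A)$ is finite. Since a unital C*-algebra with a finite unit cannot be properly infinite (a properly infinite unit is in particular infinite), no $M_n(A)$ is properly infinite, and again the hypothesis of Proposition~\ref{asycfpisgood} is met. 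In either case, Proposition~\ref{asycfpisgood} applies and yields a non-zero bounded trace on $A$.

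I do not expect a genuine obstacle here: all the substance is contained in Proposition~\ref{asycfpisgood}, and what remains is the bookkeeping about quotients of simple algebras together with the elementary observation that a finite unit is not properly infinite. The only point that calls for a moment's care is the identification of the two notions of stable finiteness (Definitions~\ref{stablyfinite} and~\ref{defstablyfinitefornonunital}) in the unital case, which is immediate from $\mathcal{M}(A)=A$. One could alternatively dispatch the unital case at once via Theorem~\ref{blackadarhandelman} together with Haagerup's theorem, or via Theorem~\ref{goodearlhandelmanhaagerup}, since a unital stably finite exact C*-algebra has a tracial state; but routing everything through Proposition~\ref{asycfpisgood} keeps the proof uniform.
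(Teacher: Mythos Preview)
Your proposal is correct and is precisely the intended argument: the corollary is an immediate specialization of Proposition~\ref{asycfpisgood}, and you have correctly verified that simplicity forces the extra hypothesis on unital quotients, handling the unital and non-unital cases separately. The paper gives no separate proof beyond stating the corollary, so your write-up makes explicit exactly what the paper leaves implicit.
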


Lemma 4.5 from \cite{Ng} gives: 

\begin{corollary}\label{regularisgood}
Every simple, stably finite, exact C*-algebra with the tracial corona factorization property admits a non-zero bounded trace.
\end{corollary}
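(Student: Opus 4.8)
The plan is to reduce this immediately to the preceding corollary, using the (entirely trivial) observation that the tracial corona factorization property implies the asymptotic tracial corona factorization property. Indeed, if $A$ is regular in the sense of Definition \ref{defregular}, then every full hereditary subalgebra $D$ of $A\otimes\KK$ with no non-zero unital quotient and no non-zero bounded trace is already stable, so Definition \ref{defasyregular} is satisfied with the choice $n=1$. (This is also part of the content of Proposition \ref{regularequalscfppulsasycfp}, which in addition records that the reverse implication is not automatic, the CFP being what is missing; but for the one implication we need, no appeal to that proposition is necessary.)

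Given this, the argument is a single step: a simple, stably finite, exact C*-algebra $A$ with the tracial CFP also has the asymptotic tracial CFP, and therefore admits a non-zero bounded trace by the corollary proved just above. Equivalently, one can invoke Proposition \ref{asycfpisgood} directly, after noting that its extra hypothesis -- that no matrix algebra over a non-zero unital quotient of $A$ is properly infinite -- is automatic when $A$ is simple: if $A$ is unital it is its own unique non-zero quotient, and stable finiteness of $\mathcal{M}(A)=A$ rules out proper infiniteness of any $M_n(A)$; if $A$ is non-unital it has no non-zero unital quotient at all.

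I do not anticipate any genuine obstacle here, since the substantive work has already been done in the proof of Proposition \ref{asycfpisgood} -- there the asymptotic tracial CFP, applied to $A$ itself (once one has ruled out a bounded trace and hence, via Theorem \ref{goodearlhandelmanhaagerup}, a unital quotient), yields a stable matrix algebra $M_n(A)$, whose multiplier algebra is properly infinite by \cite{R7}, contradicting stable finiteness. The only things to be careful about are bookkeeping of hypotheses: exactness is used so that Haagerup's theorem lets us pass from quasitraces to traces and so that Theorem \ref{goodearlhandelmanhaagerup} applies, and simplicity is what collapses the unital-quotient condition. Both points are routine, so the corollary follows without further effort.
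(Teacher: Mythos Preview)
Your proposal is correct and follows essentially the same route as the paper: the paper's entire argument is the one-line remark ``Lemma 4.5 from \cite{Ng} gives:'' preceding the statement, i.e., it reduces to the previous corollary via the (trivial) implication that the tracial CFP implies the asymptotic tracial CFP. Your write-up is in fact more self-contained than the paper's, since you spell out why that implication is immediate (take $n=1$ in Definition~\ref{defasyregular}) rather than citing \cite{Ng}, and you also make explicit why simplicity disposes of the unital-quotient hypothesis in Proposition~\ref{asycfpisgood}.
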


The last paragraph of Section \ref{preliminaries} finally gives us the following result.

\begin{corollary}\label{zstableisgood}
Every simple, stably finite, exact $\mathcal{Z}$-stable C*-algebra admits a non-zero bounded trace.
\end{corollary}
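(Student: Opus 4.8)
The plan is to obtain this as a direct consequence of Corollary \ref{regularisgood} together with the observation, recorded in the last paragraph of Section \ref{preliminaries}, that every $\mathcal{Z}$-stable C*-algebra has the tracial corona factorization property. Thus the real content is already in place: a simple, stably finite, exact $\mathcal{Z}$-stable C*-algebra $A$ is in particular simple, stably finite, exact, and has the tracial CFP, which is exactly the hypothesis of Corollary \ref{regularisgood}. So the argument I would write is essentially two lines once the $\mathcal{Z}$-stability has been cashed in for the tracial CFP.

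First I would spell out why $\mathcal{Z}$-stability yields the tracial CFP. By \cite{R5} the Cuntz semigroup $\mathrm{Cu}(A)$ of a $\mathcal{Z}$-stable C*-algebra is almost unperforated, and this passes to $A\otimes\KK$ and to its hereditary subalgebras (the Cuntz semigroup is insensitive to stabilization, and $A\otimes\KK$ remains $\mathcal{Z}$-stable). Feeding almost unperforation into Theorem 3.6 of \cite{HRW} (or directly into Corollary 3.5 there) shows that any full hereditary subalgebra $D$ of $A\otimes\KK$ with no non-zero bounded trace and no non-zero unital quotient must be stable; that is, $A$ is regular in the sense of Definition \ref{defregular}. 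The only point that needs a moment's care here is checking that the hereditary subalgebras in play inherit whatever extra structure the HRW criterion demands, but since everything is controlled by the Cuntz semigroup and almost unperforation is hereditary in the relevant sense, this is routine.

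Then I would simply invoke Corollary \ref{regularisgood} applied to $A$, which produces the desired non-zero bounded trace. The main (and essentially only) obstacle is thus entirely bundled into the implication ``$\mathcal{Z}$-stable $\Rightarrow$ tracial CFP,'' which relies on the two cited external inputs: almost unperforation of the Cuntz semigroup from \cite{R5}, and the stability criterion of \cite{HRW}. Exactness plays no new role here beyond its use inside Corollary \ref{regularisgood} (and, via \cite{Hp}, ensuring that quasitraces are traces); it is simply inherited from the hypothesis. Once the tracial CFP is granted, the corollary is immediate.
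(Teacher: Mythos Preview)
Your argument is correct and matches the paper's own approach: the corollary is obtained immediately from Corollary~\ref{regularisgood} together with the observation, recorded at the end of Section~\ref{preliminaries}, that every $\mathcal{Z}$-stable C*-algebra has the tracial corona factorization property (via almost unperforation of the Cuntz semigroup \cite{R5} and \cite{HRW}). There is nothing to add.
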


\section{A non-simple example}

But there do exist counterexamples in the general case. The construction is an application of the results from \cite{Pe}. The following theorem is a combination of Corollary 4.5 (and its proof) and Corollary 5.4 of that paper.

\begin{theorem}\label{nonfullprojectionexample}
There exists a connected compact Hausdorff space $X$ and a multiplier projection $$Q=\bigoplus_{j=1}^\infty p_j\in\mathcal{M}(C(X,\KK)),$$
where each $p_j$ is a projections in $C(X,\KK)$ of rank one, such that $Q$ is stably finite.
\end{theorem}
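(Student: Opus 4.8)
The plan is to combine two results of \cite{Pe}: a criterion reducing stable finiteness of a multiplier projection of the stated shape to a statement about vector bundles over $X$, and a Villadsen-type construction realizing that criterion. Throughout one identifies, via Serre--Swan, finite-rank projections over $X$ (projections of finite fibrewise rank in $C(X)\otimes\KK$, or over matrix algebras thereof) with complex vector bundles on $X$, so that $p\preceq q$ corresponds to inclusion as a complemented subbundle up to isomorphism; a rank-one $p_j$ is then a line bundle $L_j$, and $Q=\bigoplus_j p_j$ is the strict limit of the honest bundles $E_k:=L_1\oplus\dots\oplus L_k$. By Definition \ref{definfinite}, ``$Q$ is stably finite'' unwinds to: for every $n\geq 1$ there is no non-zero projection having infinitely many pairwise orthogonal copies inside $Q^{\oplus n}$.

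The reduction (which is \cite{Pe}, Corollary 4.5 and its proof) goes as follows. Assume for contradiction that $Q^{\oplus n}$ is infinite; then there are pairwise orthogonal non-zero projections $e_i\leq Q^{\oplus n}$ with $e_i\sim e_0$ for all $i\in\NN$. A perturbation argument --- using that projections at norm-distance $<1$ are unitarily equivalent, together with the fact that the strict approximants $Q^{\oplus n}_M\nearrow Q^{\oplus n}$ converge in norm against any compact element of $C(X,\KK)$ --- lets one replace $e_0$ by a non-zero finite-rank projection with range bundle $V$, $r:=\mathrm{rk}(V)\geq 1$, and shows that for every $N$ the bundle $V^{\oplus N}$ embeds as a complemented subbundle of some finite truncation $Q^{\oplus n}_M$. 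Passing to top Chern (Euler) classes in the resulting isomorphism $Q^{\oplus n}_M\cong V^{\oplus N}\oplus W$ gives $e(Q^{\oplus n}_M)=e(V)^N\,e(W)$, hence $e(V)^N\neq 0$ in $H^*(X;\ZZ)$ whenever $e(Q^{\oplus n}_M)\neq 0$. But $V$ is pulled back from finitely many factors of $X$, so $e(V)$ lives in the cohomology of a finite-dimensional space and is nilpotent; thus $e(V)^N=0$ for $N$ large. This is the contradiction --- provided the construction guarantees that, for each fixed $n$, the truncations $Q^{\oplus n}_M$ have non-zero Euler classes for arbitrarily large $M$.

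It remains to build $X$ and the $p_j$ with exactly that property. I would take $X$ to be a Villadsen-type space assembled from products of complex projective spaces --- either an infinite product $\prod_k\mathbb{CP}^{m_k}$ or, to secure the property for all $n$ simultaneously, a Villadsen-type inverse limit with ``diagonal'' connecting maps --- and take each $L_j=\mathrm{ran}(p_j)$ to be a pullback of a hyperplane bundle, the $k$-th ``chunk'' consisting of copies of the hyperplane bundle of the $k$-th factor with multiplicity chosen so that the top Chern class of any chunk-truncation is a non-zero product of powers of the distinguished degree-two generators and remains non-zero after every fixed inflation $(-)^{\oplus n}$. Connectedness, compactness and the Hausdorff property are then automatic. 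This is \cite{Pe}, Corollary 5.4.

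I expect the main obstacle to be twofold. On the analytic side it is the two reductions in the second paragraph: expressing Murray--von Neumann subequivalence in $\MM(C(X,\KK))$ through honest vector-bundle isomorphisms over $X$, and above all reducing an arbitrary non-zero defect projection $e_0\in\MM(C(X,\KK))$ --- a priori not compact --- to a finite-rank one. On the geometric side it is the choice of the dimensions $m_k$ and of the connecting maps so that nilpotence of the cohomology ring does not annihilate the relevant Euler classes after $n$-fold inflation; indeed a naive infinite product with hyperplane-bundle pullbacks already yields that $Q$ itself is finite, but fails for $Q^{\oplus n}$ with $n\geq 2$, where the inflated Euler class vanishes, and circumventing this is precisely what the Villadsen-style construction is designed for.
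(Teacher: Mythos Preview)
Your identification of the source is exactly right: the paper does not prove this theorem itself but records it as the combination of Corollary~4.5 (and its proof) and Corollary~5.4 of \cite{Pe}, which is precisely what you cite. At that level your proposal and the paper coincide.

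Your sketch of the reduction step (the ``Corollary~4.5'' part) is sound. The one point you leave implicit is why the compact defect bundle $V$ may be assumed to be pulled back from finitely many coordinates; this is because for $X$ an infinite product the algebra $C(X,\KK)$ is the inductive limit of the algebras over the finite sub-products, so any projection in it can be perturbed into one of the finite stages. With that standard remark, the Euler-class/nilpotence contradiction you outline goes through.

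Where your sketch departs from \cite{Pe} is the construction (the ``Corollary~5.4'' part). Judging from the tools quoted elsewhere in the paper --- Proposition~\ref{technicalresult} and the construction in Theorem~\ref{simpleexample} --- \cite{Pe} does \emph{not} use products of $\mathbb{CP}^{m_k}$ with hyperplane pullbacks. It takes $X=\prod S^2$ and lets each line bundle $p_j$ be a \emph{tensor product} $p_{I_j}$ of Bott bundles over a finite coordinate set $I_j$, with the $I_j$ pairwise disjoint and $|I_j|\to\infty$. Since $e(p_{I_j})=\sum_{i\in I_j}x_i$ (with $x_i^2=0$), one has $e(p_{I_j})^n\neq 0$ exactly when $|I_j|\geq n$, so the inflated Euler class $e\bigl((Q_M)^{\oplus n}\bigr)=\prod_{j\le M}e(p_{I_j})^n$ stays non-zero for all $M$ once the first few summands are discarded. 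This is how the ``vanishing after inflation'' obstacle you correctly diagnose is actually circumvented, and it is governed by the combinatorial criterion of Proposition~\ref{technicalresult} rather than by a $\mathbb{CP}^{m_k}$ dimension count. Your $\mathbb{CP}^{m_k}$/inverse-limit variant could presumably be pushed through, but it is not the construction in \cite{Pe}, so your attribution of it to Corollary~5.4 should be adjusted.
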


We will start with the construction for the non-simple example. 

\begin{theorem}\label{nonsimpleexample}
There exists a separable (non-simple), exact (non-unital), stably finite C*-algebra with no non-zero bounded trace. 
\end{theorem}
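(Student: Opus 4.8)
The plan is to build the desired C*-algebra as a hereditary subalgebra of $C(X,\KK)$ determined by the multiplier projection $Q$ from Theorem \ref{nonfullprojectionexample}. Concretely, I would set $A := Q\,C(X,\KK)\,Q$ (or, equivalently, the hereditary subalgebra of $C(X,\KK)$ generated by $Q$). Since $Q = \bigoplus_{j=1}^\infty p_j$ is an infinite orthogonal sum of rank-one projections over the connected space $X$, the algebra $A$ will be non-unital (the sum does not converge in norm, so $Q \notin C(X,\KK)$) and separable (as a subalgebra of the separable algebra $C(X,\KK)$, once $X$ is chosen metrizable, which Corollary 4.5/5.4 of \cite{Pe} should provide). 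Exactness is automatic: $C(X,\KK)$ is nuclear, hence exact, and exactness passes to subalgebras. Stable commutativity is the observation that $A$ is stably isomorphic to a commutative C*-algebra; indeed $A$ is a full hereditary subalgebra of $C(X,\KK)$ is not quite what I want — rather, $A\otimes\KK \cong \big(Q C(X,\KK) Q\big)\otimes\KK$, and since $Q$ has "infinite rank" fibrewise one expects $A\otimes\KK \cong C(X)\otimes\KK$, exhibiting $A$ as stably commutative; I would check this via a standard Brown-type stable isomorphism argument, using that the fibres of $Q$ are infinite-rank projections over the connected base.

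The three properties to verify are: (i) $A$ is stably finite in the sense of Definition \ref{defstablyfinitefornonunital}, i.e. $\mathcal{M}(A)$ is stably finite; (ii) $A$ has no non-zero bounded trace; (iii) $A$ is non-simple. For (i), the key identification is $\mathcal{M}(A) \cong Q\,\mathcal{M}(C(X,\KK))\,Q$ (the corner of the multiplier algebra by $Q$), which holds because $A = Q C(X,\KK) Q$ is a hereditary subalgebra with $Q$ as an "open projection" / strictly-positive-supporting multiplier projection — this corner identification of multiplier algebras is standard (see \cite{L}). Then the unit of $\mathcal{M}(A)$ is exactly $Q$, so $\mathcal{M}(A)$ is stably finite precisely when $Q$ is a stably finite projection in $\mathcal{M}(C(X,\KK))\otimes\KK = \mathcal{M}(C(X,\KK\otimes\KK))$, which is exactly the conclusion of Theorem \ref{nonfullprojectionexample}. (Here I would use $\KK\otimes\KK\cong\KK$ and compatibility of multiplier algebras with stabilization to line up the statements.) For (iii), non-simplicity follows from $A$ being stably commutative with $X$ a compact Hausdorff space of positive dimension: a simple subhomogeneous algebra would force $X$ to be a point, contradicting that $Q$ has infinitely many rank-one summands over a connected $X$; alternatively $C(X)\otimes\KK$ visibly has many ideals.

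The main obstacle — and the real content — is (ii): showing $A$ has no non-zero bounded trace. A bounded trace $\tau$ on $A$ would, by stable commutativity, correspond to a bounded trace on $A\otimes\KK \cong C(X)\otimes\KK$, i.e. (up to the usual tracial correspondence for stabilizations) to a densely-defined lower-semicontinuous trace on $C(X)\otimes\KK$ that is *bounded on $A$*. Such traces are given by (possibly infinite) positive measures $\mu$ on $X$, and boundedness on $A = Q C(X,\KK) Q$ translates into $\sum_{j=1}^\infty \int_X p_j(x)\,d\mu(x) < \infty$ where $p_j(x)$ denotes the rank-one fibre, i.e. a summability condition on the $\mu$-masses of the ranks of the $p_j$. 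The crux is that the projection $Q$ was *constructed* in \cite{Pe} precisely so that this fails for every non-zero measure: the ranks of the $p_j$, as functions on $X$, must be "spread out" enough (this is the same mechanism that makes $Q$ non-full but stably finite) that no non-zero positive measure can make the series converge. So I would extract from the construction in \cite{Pe} the explicit description of the $p_j$ (their supports/ranks over $X$) and verify that $\int_X \big(\sum_j p_j(x)\big)\,d\mu(x) = \infty$ for all $\mu\neq 0$, which is exactly the statement that $Q$ has no bounded trace. This last verification is where one genuinely reuses the geometry of the Villadsen-type space $X$ from \cite{Pe}, and it is the step I expect to require the most care; everything else is assembling standard facts about hereditary subalgebras, multiplier corners, and stabilization.
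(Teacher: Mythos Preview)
Your construction $A = Q\,C(X,\KK)\,Q$ and the verification of stable finiteness via $\mathcal{M}(A)\cong Q\,\mathcal{M}(C(X,\KK))\,Q$ (so that $\mathbbm{1}_{M_n(\mathcal{M}(A))}=n\cdot Q$) match the paper exactly. The divergence is in step~(ii), and there you have misidentified where the work lies.

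The projections $p_j$ all have rank \emph{one} over the connected space $X$ --- this is part of the statement of Theorem~\ref{nonfullprojectionexample} --- so their ranks are not ``spread out'' at all, and the absence of a bounded trace does \emph{not} use the Villadsen-type geometry of $X$. The paper argues directly on the finite corners: set $Q_n=\bigoplus_{j=1}^n p_j$ and $D_n=Q_n\,C(X,\KK)\,Q_n$, and restrict a putative bounded trace $\tau$ to each unital $D_n$. Using Swan's theorem, the Riesz representation theorem, and a Krein--Milman reduction to Dirac measures, the paper shows that every tracial state on $D_n$ assigns to a projection $p$ the value $\mathrm{rank}(p)/\mathrm{rank}(Q_n)$; hence there is a constant $k>0$ with $\tau(Q_n)=nk\to\infty$, contradicting boundedness. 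Your measure-theoretic route would reach the same conclusion --- for any non-zero Radon measure $\mu$ on $X$ one simply gets $\sum_j\int_X \mathrm{rank}(p_j)\,d\mu=\sum_j\mu(X)=\infty$ --- but it requires the extra step of extending $\tau$ to a lower-semicontinuous trace on $C(X,\KK)$ (not just ``stable commutativity''), which the paper sidesteps by staying inside the unital corners $D_n$. The Villadsen geometry of $X$ is consumed entirely by Theorem~\ref{nonfullprojectionexample} to make $Q$ stably finite; once that is granted, the trace argument needs only connectedness of $X$ and the rank-one hypothesis on the $p_j$, so the step you flagged as ``requiring the most care'' is in fact the easiest.
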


\begin{proof}
Let $X$ be a Hausdorff space and let $Q$ a multiplier projection be as in the statement of Theorem \ref{nonfullprojectionexample}. Let $C:=C(X,\KK)$. Define $B$ to be the cut-down C*-algebra 
$$B:=QCQ.$$
Then $B$ is stably finite (since $\mathbbm{1}_{M_n(B)}=n\cdot Q$). Also, $B$ is exact.\\

Assume there is a non-zero bounded trace $\tau$ on $B$. Consider the sequence $(Q_n)_{n\in \NN}$ of subprojections of $Q$ given by
$$Q_n:=\bigoplus_{j=1}^n p_j,$$
and consider the corresponding cut-down C*-algebras
$$D_n:=Q_n C Q_n.$$
Then each $D_n$ is a unital hereditary subalgebra of $D$ and the sequence $(D_n)_{n\in \NN}$ is increasing in the sense that each $D_n$ is a hereditary subalgebra of $D_{n+1}$ for every $n$ in $\NN$. The bounded trace $\tau$  gives for each $n\in \NN$, by restriction, a bounded trace $\tau_n$ on $D_n$, such that for each projection $p\in D_n$ we have 
$$\tau(p)=\tau_m(p)=\tau_n(p)$$
whenever $m\geq n$.

The bounded trace $\tau_n$ is a scalar multiple of a tracial state on $D_n$. Using Swan's Theorem, $D_n$ is a hereditary subalgebra of some matrix algebra over $C(X)$. By the Riesz representation theorem together with the fact that states extend uniquely from hereditary subalgebras (\cite{P} Proposition 3.1.6), one sees that any tracial state $\sigma$ on $D_n$ is of the form
$$\sigma(a)=\frac{1}{rank(Q_n)}\int Tr(a(x))d\mu(x),$$
where $\mu$ is a Borel probability measure on the compact Hausdorff space $X$, and $Tr$ denotes the standard trace on matrix algebras over the complex numbers. The tracial state space is a simplex (\cite{BH} II.4.4), and its extreme points correspond to the Dirac measures $\mu_x$ (see e.g. \cite{S} Example 8.16), i.e., the extreme points are the tracial states given by
$$\sigma_{x_0}(a)=Tr(a(x_0))$$
for some $x_0\in X$. It follows that for any projection $p\in D_n$ and for any extreme point $\sigma$ of the tracial state space we have $$\sigma(p)=\frac{rank(p)}{rank(Q_n)},$$ hence the same must be true for any tracial state (being, by the Krein-Milman theorem, in the closed convex hull of the extreme points).

Therefore there is some $k\in\RR$ such that $\tau_n(p)=k\cdot rank(p)$ for every $n\in\NN$ and $p\in D_n$. The number $k$ is strictly positive. Indeed, $k\geq 0$ because traces are positive. Also number $k$ cannot be zero because, by continuity of $\tau_n$, the closure of the set \mbox{$\{c\in C\ |\ \tau(c)=0\}$} is an ideal of $C$ and every projection in $D_n$ is full.

Hence, the sequence of projections $Q_n$ in $C$ satisfy
$$\tau(Q_n)=\tau_n(Q_n)=n \cdot k\stackrel{n\rightarrow \infty}{\longrightarrow}\infty.$$
This contradicts the assumption that $\tau$ is bounded.


\end{proof}

\section{A simple example}

We now turn to the simple case, in which we only prove that the matrix algebras over the multiplier algebra of the constructed C*-algebra $B$ are not properly infinite, but stable finiteness might hold as well. Let us set the following notation, which is adapted from \cite{R1}. Let 
$$p\in \mbox{C}(S^2,M_2(\CC)) \mbox{ denote the Bott projection,}$$
i.e., the projection corresponding to the `Hopf bundle' $\xi$ over $S^2$ with total Chern class $c(\xi)=1+x$.

With $n,N\in \NN$ and $1\leq n\leq N$, let $\pi_n:\prod_{j=1}^N S^2 \rightarrow S^2$ denote the coordinate projection onto the $n$-th coordinate. Consider the (orthogonal) projection
$$p_n:=p\circ \pi_n \in \mbox{C}(\prod_{j=1}^N S^2,M_2(\CC)).$$
If $I\subseteq \{1,2,\ldots,N\}$ is a finite subset, $I=\{n_1,n_2,\ldots n_k\}$, then let $p_I$ denote the pointwise tensor product 
$$p_I:=p_{n_1}\otimes p_{n_2} \otimes \ldots \otimes p_{n_k}\ \in \mbox{C} (\prod_{j=1}^N S^2,M_2(\CC)\otimes M_2(\CC)\otimes\ldots \otimes M_2(\CC)).$$

Using the well-known correspondence between complex vector bundles and Murray-von Neumann equivalence classes of projections, it is shown in \cite{R1} that the projection $p_n$ corresponds to the pull-back of the Hopf bundle via the coordinate projection $\pi_n$, denoted by $\xi_n:=\pi_n^*(\xi)$, and that the projection $p_I$ corresponds to the tensor product of vector bundles $\xi_{n_1}\otimes \xi_{n_2}\otimes \ldots \otimes \xi_{n_k}$. We will denote any projection that corresponds to the trivial (complex) line bundle by $e$.\\

Considering the compact operators $\KK$ on a separable Hilbert space as an AF algebra, the inductive limit of the sequence
$$\CC\rightarrow M_2(\CC)\rightarrow M_3(\CC)\rightarrow M_4(\CC)\rightarrow \ldots ,$$
with connecting $^*$-homomorphisms mapping each matrix algebra into the upper left corner of any larger matrix algebra at a later stage, we get an embedding of each matrix algebra over $\CC$ into the compact operators $\KK$. In this way we can consider all the projections $p_n$ and $p_I$, defined as above, as projections in C$(\prod_{j=1}^N S^2,\KK)$\\

\noindent We will need the following technical result, which is an improvement to results obtained by R\o rdam in \cite{R1}.

\begin{proposition}\label{technicalresult}[cf. \cite{Pe}, Proposition 3.2]
Let $n,N\in\NN$. For each $ 1\leq j\leq n$ let $I_j$ be a finite subset of $\{1,2,\ldots,N\}$ and consider the projection $Q$ in $C(\prod_{j=1}^N S^2,\KK))$ given by
$$Q=\bigoplus_{j=1}^n p_{I_j}.$$
Let $m\in\NN$. \\
Then the following statements are equivalent:
\begin{itemize}
\item[(i)] $m\cdot e\npreceq Q= \bigoplus_{j=1}^{n}p_{I_j}$
\item[(ii)] $|F| < \left | \bigcup_{j\in F} I_j \right |+m$ for all finite subsets $F\subseteq\{1,2,\ldots,n\}$. 
\end{itemize}
\end{proposition}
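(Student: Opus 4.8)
The plan is to pass, via Swan's theorem, to the language of complex vector bundles over the compact Hausdorff space $X:=\prod_{j=1}^{N}S^2$, whose integral cohomology ring is $H^{*}(X;\ZZ)\cong\ZZ[x_1,\dots,x_N]/(x_1^2,\dots,x_N^2)$ with $\deg x_i=2$. Under the correspondence between Murray--von Neumann classes of projections in $C(X,\KK)$ and isomorphism classes of bundles, orthogonal sums correspond to Whitney sums, $e$ corresponds to the trivial line bundle, the projection $p_{I_j}$ corresponds to the line bundle $L_j:=\bigotimes_{i\in I_j}\xi_i$ with $c_1(L_j)=y_j:=\sum_{i\in I_j}x_i$, and $p\preceq q$ holds exactly when the bundle of $p$ is isomorphic to a subbundle of the bundle of $q$ --- equivalently, using a fibre metric, a direct summand of it. Write $E:=\bigoplus_{j=1}^{n}L_j$ for the rank-$n$ bundle corresponding to $Q$, and set $d(F):=|F|-\bigl|\bigcup_{j\in F}I_j\bigr|$ for $F\subseteq\{1,\dots,n\}$ and $d^{*}:=\max_{F}d(F)\ge 0$. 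Then (ii) is precisely the statement $d^{*}<m$, while (i) says that the trivial rank-$m$ bundle $\theta_m$ is not a direct summand of $E$. (When $m>n$ both (i) and (ii) hold by a rank count, so assume $m\le n$ from now on.)

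For (ii)$\Rightarrow$(i): suppose $m\cdot e\preceq Q$, so $E\cong\theta_m\oplus\eta$ with $\operatorname{rank}\eta=n-m$ and hence $c_k(E)=c_k(\eta)=0$ for all $k>n-m$. Since $c(E)=\prod_{j=1}^{n}(1+y_j)$, we have $c_k(E)=e_k(y_1,\dots,y_n)$. Expanding a monomial $\prod_{j\in S}y_j$ and using $x_i^2=0$ to discard every non-squarefree term, one sees that $\prod_{j\in S}y_j$ is a non-negative integer combination of the squarefree monomials $\prod_{i\in T}x_i$ with $|T|=|S|$, the coefficient of $\prod_{i\in T}x_i$ being the number of bijections $S\to T$ that send each $j$ into $I_j$. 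As the squarefree monomials are $\ZZ$-independent and all these coefficients are non-negative, $e_k(y_1,\dots,y_n)\ne 0$ if and only if some $k$-element $S\subseteq\{1,\dots,n\}$ admits a system of distinct representatives for $(I_j)_{j\in S}$; by the deficiency version of Hall's marriage theorem the largest such $k$ equals $n-d^{*}$. Since $d^{*}<m\le n$ gives $1\le n-d^{*}$ and $n-d^{*}>n-m$, we obtain $c_{n-d^{*}}(E)\ne 0$ with $n-d^{*}>n-m$, contradicting the vanishing above.

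For (i)$\Rightarrow$(ii), argue contrapositively. If (ii) fails then $d^{*}\ge m$, so it is enough to prove $d^{*}\cdot e\preceq Q$, whence $m\cdot e\preceq d^{*}\cdot e\preceq Q$. Choose $F^{*}$ attaining $d(F^{*})=d^{*}$ and put $U:=\bigcup_{j\in F^{*}}I_j$, so $|U|=|F^{*}|-d^{*}$. Each $L_j$ with $j\in F^{*}$ is pulled back along the coordinate projection onto the factor $\prod_{i\in U}S^2$, so $\bigoplus_{j\in F^{*}}L_j$ is the pullback of a rank-$|F^{*}|$ bundle $E'$ over $\prod_{i\in U}S^2$, a closed manifold of dimension $2|U|=2(|F^{*}|-d^{*})$. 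A complex vector bundle of rank $r$ over a CW complex of dimension $<2r$ has a nowhere-vanishing section; applying this to $E'$ and peeling off one trivial line summand at a time, the rank drops from $|F^{*}|$ to $|F^{*}|-d^{*}$ --- the bound $2(|F^{*}|-d^{*})$ staying below twice the current rank throughout --- so $E'\cong\theta_{d^{*}}\oplus E''$ for some $E''$, and hence $\bigoplus_{j\in F^{*}}L_j\cong\theta_{d^{*}}\oplus(\text{pullback of }E'')$. Thus $\theta_{d^{*}}$ is a subbundle of $\bigoplus_{j\in F^{*}}L_j$, which is a direct summand --- hence a subbundle --- of $E$; translating back, $d^{*}\cdot e\preceq Q$.

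The step I expect to be the crux is the non-vanishing of $c_{n-d^{*}}(E)=e_{n-d^{*}}(y_1,\dots,y_n)$: one must rule out cancellation among the contributions of the various subsets $S$, which is precisely where the non-negativity of the "number of systems of distinct representatives" coefficients and the $\ZZ$-independence of squarefree monomials in $\ZZ[x_1,\dots,x_N]/(x_i^2)$ enter, together with the deficiency form of Hall's theorem pinning down $n-d^{*}$ as the size of a largest partial transversal of $(I_1,\dots,I_n)$. The bundle-theoretic inputs --- Swan's theorem, the existence of complementary subbundles over compact spaces, and the obstruction-theoretic section result --- are routine.
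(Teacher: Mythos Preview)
The paper does not include its own proof of this proposition; it is quoted verbatim from the companion paper \cite{Pe}, Proposition 3.2, and is used here as a black box. Your argument is correct and is precisely the expected one (and, as far as one can infer from \cite{R1} and the way the result is framed, the one given in \cite{Pe}): translate via Swan's theorem to complex vector bundles over $X=(S^2)^N$, compute $c(E)=\prod_j(1+y_j)$ in the squarefree ring $\ZZ[x_1,\dots,x_N]/(x_i^2)$, observe that each monomial $\prod_{j\in S}y_j$ expands with non-negative coefficients counting systems of distinct representatives, apply the deficiency form of Hall's marriage theorem to identify the top nonvanishing Chern class as $c_{\,n-d^{*}}(E)$, and for the converse use the stable-range section theorem for complex bundles over the $2|U|$-dimensional factor $\prod_{i\in U}S^2$ to split off $d^{*}$ trivial summands. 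The iterative splitting is handled correctly: at step $k$ (for $0\le k\le d^{*}-1$) the rank is $|F^{*}|-k$ and the base has dimension $2(|F^{*}|-d^{*})<2(|F^{*}|-k)$, so the obstruction groups vanish each time. There is nothing to add or correct.
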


\noindent The criterion for checking that a projection is not properly infinite is given by the following Lemma (\cite{Pe}, Lemma 5.1):
\begin{lemma}\label{notpicriteria}
Let $A$ be a C*-algebra and $p$ and $q$ two projections in $A \otimes \KK$ such that $p\preceq k\cdot q$, but $p \npreceq m\cdot q$ for some $m<k$. Then $q$ is not properly infinite.
\end{lemma}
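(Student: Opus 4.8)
The plan is a short argument by contradiction. Suppose $q$ is properly infinite, i.e.\ $q\neq 0$ and $q\oplus q\preceq q$. Since $A\otimes\KK$ is stable, $M_r(A\otimes\KK)\cong A\otimes\KK$ for every $r$, so each amplification $j\cdot q$ can be regarded as a projection in $A\otimes\KK$ itself, and every subequivalence below takes place in this one algebra.

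First I would upgrade $q\oplus q\preceq q$ to $j\cdot q\preceq q$ for all $j\in\NN$ by a routine induction: the case $j=1$ is trivial, and if $j\cdot q\preceq q$ then, using that subequivalence respects direct sums (from $a_1\preceq b_1$ and $a_2\preceq b_2$ one gets $a_1\oplus a_2\preceq b_1\oplus b_2$), $(j+1)\cdot q=(j\cdot q)\oplus q\preceq q\oplus q\preceq q$. In particular $k\cdot q\preceq q$. Since $m\geq 1$ we also have $q\preceq m\cdot q$, embedding $q$ as the first summand. Combining these with the hypothesis $p\preceq k\cdot q$ and transitivity of $\preceq$ gives
$$p\;\preceq\;k\cdot q\;\preceq\;q\;\preceq\;m\cdot q ,$$
so $p\preceq m\cdot q$, contradicting $p\npreceq m\cdot q$. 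Hence $q$ is not properly infinite.

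I do not expect a genuine obstacle here; the argument is elementary. The only points deserving a word of care are the identification of the amplifications $j\cdot q$ with projections inside the stable algebra $A\otimes\KK$, the compatibility of $\preceq$ with $\oplus$, and the (tacit) use of $m\geq 1$ in the step $q\preceq m\cdot q$ — this hypothesis is essential, since for any properly infinite projection $q$, taking $p=q$, $k=1$, $m=0$ would otherwise contradict the conclusion.
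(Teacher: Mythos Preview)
Your argument is correct. Note, however, that the paper does not actually prove this lemma: it is merely stated and attributed to \cite{Pe}, Lemma~5.1. The contradiction argument you give --- inducting on $q\oplus q\preceq q$ to obtain $k\cdot q\preceq q\preceq m\cdot q$ and hence $p\preceq m\cdot q$ --- is the standard elementary one, and your observation that the tacit hypothesis $m\geq 1$ is essential is correct; in the paper's only application of the lemma (with $m=n$, $k=2n$, $n>R$) this is automatically satisfied.
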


\noindent We are now ready for the construction of the simple example contradicting a possible generalization of the Blackadar-Handelman theorem.

\begin{theorem}\label{simpleexample}
There exists a separable, simple, exact (non-unital) C*-algebra $B$ such that $B$ has no non-zero bounded trace and such that no (non-zero) matrix algebra over the multiplier algebra of $B$ is properly infinite.
\end{theorem}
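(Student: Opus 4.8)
The plan is to build $B$ as a direct limit (or a carefully chosen hereditary subalgebra of such a limit) of cut-downs of the algebras $C(\prod_{j=1}^{N_k} S^2,\KK)$ by projections of the form $Q_k=\bigoplus_{j=1}^{n_k} p_{I_j^{(k)}}$, arranging the connecting maps and the index sets $I_j^{(k)}$ so that three things hold simultaneously: (1) the limit is simple, (2) no matrix algebra over the limit admits a non-zero bounded trace, and (3) for every $m$, every matrix amplification of the relevant multiplier projection fails to be properly infinite. R\o rdam's construction in \cite{R1} already produces, by this kind of combinatorial bookkeeping on the sets $I_j$, a simple algebra with a stable matrix algebra but no bounded trace; the task here is to tighten the bookkeeping so that the conclusion is the stronger ``not properly infinite'' statement of Lemma \ref{notpicriteria} at every matrix level, rather than merely stability of some single matrix algebra.

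First I would fix, for each stage $k$, integers $N_k$, $n_k$ and subsets $I_1^{(k)},\dots,I_{n_k}^{(k)}\subseteq\{1,\dots,N_k\}$, and set $A_k:=Q_kC(\prod_{j=1}^{N_k}S^2,\KK)Q_k$ with unital connecting maps $A_k\to A_{k+1}$ coming from coordinate-projection pull-backs, so that $Q_{k+1}$ restricted appropriately decomposes compatibly with $Q_k$; then $B$ is a suitable full hereditary subalgebra of $\varinjlim A_k\otimes\KK$. Simplicity is forced in the usual way by choosing the connecting maps so that the image of any non-zero hereditary subalgebra eventually spreads across all summands (as in \cite{R1}). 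Absence of a non-zero bounded trace on every matrix algebra over $B$ is obtained exactly as in the non-simple example (Theorem \ref{nonsimpleexample}): a bounded trace would restrict to each unital building block $D$, be a positive multiple $k\cdot\mathrm{rank}(\cdot)$ there by the Swan/Riesz/Krein-Milman argument, with $k>0$ by fullness, and then the ranks of the projections $Q_k$ grow without bound, contradicting boundedness. The new ingredient is the multiplier estimate: one computes, using Proposition \ref{technicalresult}, bounds $k$ and $m<k$ with $m\cdot e\npreceq Q_k$ but $p\preceq k\cdot Q_k$ for a suitable full projection $p$, and checks that these bounds survive the passage to the limit and to $M_\ell(\mathcal{M}(B))$ for every $\ell$; by Lemma \ref{notpicriteria} this yields that no matrix algebra over the multiplier algebra is properly infinite.

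The main obstacle will be the interaction between \emph{simplicity} and the combinatorial inequality (ii) of Proposition \ref{technicalresult}. Making the limit simple requires the connecting maps to mix the summands thoroughly, which tends to enlarge the unions $\bigcup_{j\in F} I_j$ relative to $|F|$ only slowly, whereas preserving ``not properly infinite'' at \emph{all} matrix levels $M_\ell$ requires the inequality $|F|<|\bigcup_{j\in F}I_j|+m$ to persist with a controlled $m$ uniformly in $\ell$ and through every stage of the limit. Balancing the growth rates of $n_k$, $N_k$ and the sizes $|I_j^{(k)}|$ so that both constraints hold at once — essentially a Hall-type / set-system design problem — is the delicate part; once the right asymptotic regime is identified, the verification of (i)$\Leftrightarrow$(ii), the trace argument, and the limit bookkeeping are routine given the results already quoted. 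Finally I would remark that $B$ is exact (being an inductive limit of subhomogeneous algebras stabilized), so by \cite{Hp} it has no bounded quasitrace either, and note that, as in \cite{KN3} Proposition 5.3, $B$ nonetheless fails the CFP.
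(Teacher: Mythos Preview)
Your plan matches the paper's approach: a Villadsen--R\o rdam type AH algebra over products of spheres, a hereditary cut-down by a multiplier projection built from Bott projections, Proposition~\ref{technicalresult} plus Lemma~\ref{notpicriteria} for the multiplier estimate, and rank growth for the trace argument. Two concrete design choices, however, carry the entire proof and are left unspecified in your outline; without them the ``Hall-type balancing'' you describe cannot be closed.

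First, simplicity is obtained not by coordinate projections alone but by inserting one point-evaluation summand in each connecting map $\varphi_j$ (multiplicity $k_j+1$: $k_j$ coordinate projections and one evaluation, with evaluation points chosen so that their orbits are dense, invoking \cite{DNP}). These evaluations turn pieces of the Bott projections into trivial line bundles, and it is precisely this accumulation of trivial summands that threatens condition (ii) of Proposition~\ref{technicalresult} at large matrix level. The paper controls it by imposing the convergence condition $\sum_{s\ge 1}\bigl(1-\prod_{r\ge s}\tfrac{k_r}{k_r+1}\bigr)=R<\infty$ on the multiplicities; this single hypothesis bounds the proportion of trivial summands in $\varphi_{i,j}(q_s)$ uniformly in $i,j$ and yields the sharp threshold $M(n,R)=\tfrac{n(n-1)}{2}+nR$ with $n\cdot(Q_1\oplus\cdots\oplus Q_j)\nsucceq M(n,R)\cdot E$ for all $j$ but $2n\cdot(Q_1\oplus\cdots\oplus Q_j)\succeq M(n,R)\cdot E$ for large $j$ and $n>R$, so Lemma~\ref{notpicriteria} fires at every matrix level. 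Second, the projection $q_j$ at stage $j$ is a direct sum of exactly $m_j=\prod_{i<j}(k_i+1)$ summands $p_{I_\alpha^j}$ with $|I_\alpha^j|=j$, using only the fresh $S^2$-coordinates introduced at stage $j$. The growing tensor length $|I_\alpha^j|=j$ is what gives $j\cdot q_j\nsucceq e_j$ and makes the obstruction persist at every matrix level; the choice of $m_j$ summands forces $\mathrm{rank}(q_j)=\mathrm{rank}(\varphi_{j,1}(e_1))$, which is the rank match that drives the trace argument ($\tau(Q_j)=\tau(E)>0$ for all $j$, hence $\tau$ unbounded on the approximate unit). Note also a structural point: $B$ is not assembled as a limit of your unital cut-downs $A_k$; rather the simple non-unital $A$ is built first, $Q=\bigoplus_j Q_j$ is a genuine multiplier projection in $\mathcal{M}(A\otimes\KK)$, and the partial sums $Q_1\oplus\cdots\oplus Q_j$ form an approximate unit of $B=Q(A\otimes\KK)Q$ against which the comparison with $E\in A$ is carried out.
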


\begin{proof}
The construction is closely related to the one M. R\o rdam used to construct an example of a simple C*-algebra $A$, such that $M_n(B)$ is stable, but $M_k(B)$ is not stable for any $k<n$ (\cite{R2}). The idea of his construction furthermore goes back to ideas of Villadsen, who was the first one to construct a C*-algebra with perforation in its ordered $K_0$-group (\cite{V1}).

Let $X_1=S^2$ be the two-sphere. Then choose for each $j\in\NN$, $j\geq 2$, a compact Hausdorff space $X_j$ as a product of two-spheres. The number of copies of the two-sphere is  recursively defined by
$$X_{j+1}=X_j^{k_j}\times (S^2)^{m_{j+1}(j+1)}$$
where the $k_j$ are natural numbers such that
$$\sum_{s=1}^\infty\left [\left ( 1-\prod_{r=s}^{\infty}\frac{k_r}{k_r+1}  \right )\right ]= R<\infty,$$
and the $m_j,\ j\in\NN$, are recursively defined as $m_1=1$, $m_{j+1}=(k_j+1)m_j$. This gives
$$m_j=\prod_{i=1}^{j-1}(k_i+1)$$
and both the recursive and the explicit formula will be useful in the sequel.\\

We will define our C*-algebra $B$ as a hereditary subalgebra of the stabilization of a certain AH-algebra. So our first step should be the construction of the AH-algebra. For this define for each $j$ a homogeneous algebra $A_j$ by
$$A_j:=C(X_j,\KK)\cong C(X_j)\otimes \KK.$$
With $\pi_l^j:X_{j+1}\rightarrow X_j$ the projection map onto the $l$-th copy of $X_j$ in $X_ {j+1}$ ($l=1,2,\ldots,k_j$), and $c_j\in X_j$ a point to be specified, the connecting maps are given by 
$$\varphi_j:A_j\rightarrow M_{(k_{j}+1)}(A_j)\cong  A_{j+1},  $$
$$\varphi_j(f)(x)=\mbox{diag}(f\circ\pi_1^j(x),f\circ\pi_2^j(x),\ldots,f\circ\pi_{k_j}^j(x),f(c_j)).$$
Note that the multiplicity of $\varphi_j$ is $(k_{j}+1)$. To consider compositions of connecting maps we define
$$\varphi_{i,j}:=\varphi_{i-1}\circ \varphi_{i-2}\circ \ldots\circ \varphi_{j+1}\circ \varphi_j :A_j\rightarrow A_i\  (i> j),$$
$$k_{i,j}:=\prod_{n=j}^{i-1}k_n \mbox{ (the number of projection maps in $\varphi_{i,j}$)},\mbox{ and} $$
$$l_{i,j}:=\prod_{n=j}^{i-1}(k_n+1)-\prod_{n=j}^{i-1}k_n  \mbox{ (the number of point evaluations in $\varphi_{i,j}$)}.$$
Note that $k_{i,j}+l_{i,j}=\prod_{n=j}^{i-1}(k_n+1)$=multiplicity of $\varphi_{i,j}$. Then 
$$\varphi_{i,j}(f)\sim \mbox{diag}(f\circ \pi_1^{i,j},f\circ \pi_2^{i,j},\ldots,f\circ \pi_{k_{i,j}}^{i,j},f(c_1^{i,j}),f(c_2^{i,j}),\ldots f(c_{l_{i,j}}^{i,j})),$$
with coordinate projection maps $\pi_1^{i,j},\pi_2^{i,j},\ldots,\pi_{k_{i,j}}^{i,j}:X_i\rightarrow X_j$ and points $c_k^{i,j}\in X_j$.

 Define $X_j^i:=\{c_1^{i,j},c_2^{i,j},\ldots,c_{l_{i,j}}^{i,j}\}$, i.e., $X_j^i$ is the set of points in $X_j$ that appear as point evaluations in $\varphi_{i,j}$. 
Then 
$$X_{j}^i=X_j^{i-1}\cup \{\pi_1^{i,j}(c_i),\pi_2^{i,j}(c_i),\ldots,\pi_{k_{i,j}}^{i,j}(c_i)\}\ , \ i>j+1,$$
and $X_j^{j+1}=\{c_j\}$.
Choose the points $c_j$ in such a way that the set $$\bigcup_{r=j+1}^{\infty} X_j^r
$$ is dense in $X_j$ for all $j\in\NN$.

By Proposition 2.1 of \cite{DNP} the choice of point evaluations implies that the C*-algebra $A$, given as the inductive limit 
$$\xymatrix{ A_1\ar[r]^{\varphi_1}  & A_2\ar[r]^{\varphi_2}\ar@/_1pc/[rr]_{\varphi_{j,2}} & \ldots\ar[r]^{\varphi_{j-1}} & A_j\ar[r]^{\varphi_j} \ar@/_1pc/[rr]_{\varphi_{\infty,j}} & \ldots\ar[r] & A},$$
is simple. Moreover, $A$ is exact and separable by construction. \\

As mentioned before, the C*-algebra $B$ we are looking for is a hereditary subalgebra of $A\otimes \KK$ defined as the cut-down algebra $B=Q(A\otimes \KK)Q$ coming from a certain multiplier projection $Q$. So our next step is to describe the multiplier projection $Q\in\mathcal{M}(A\otimes \KK)$:

Let $\varphi_{\infty,j}:A_j\rightarrow A$ denote the canonical maps from the building block algebras $A_j$ into the inductive limit algebra $A$.

Denote by $q_1\in A_1=C(S^2,\KK)$ the Bott projection, and set $Q_1:=\varphi_{\infty,1}(q_1)$.

Denote by $q_2\in A_2=C((S^2)^{k_1+2m_2},\KK)$ the projection given by the direct sum of $m_2$ tensor products of two Bott projections such that each of the last $2m_2$ coordinates of $X_2=X_1^{k_1}\times (S^2)^{2m_2}$ is used precisely once, i.e.,
$$q_2:=\bigoplus_{\alpha=1}^{m_2}p_{I_\alpha^2}$$
$$\mbox{where }|I_\alpha^2|=2,\ I_\alpha^2\cap I_\beta^2=\emptyset\ \mbox{ for $\alpha\neq \beta$, and }\bigcup_{\alpha=1}^{m_2}I_\alpha=\left ( \mbox{last $2m_2$ coordinates of $X_2$}\right ).$$ 
Set $Q_2:=\varphi_{\infty,2}(q_2)$.

Similarly for $j\geq 3$, continue as follows.

Denote by $q_j\in A_j=C(X_{j-1}^{k_{j-1}}\times(S^2)^{jm_j},\KK)$ the projection given by the direct sum of $m_j$ tensor products of $j$ Bott projections such that each of the last $jm_j$ coordinates of $X_j=X_{j-1}\times (S^2)^{jm_j}$ is used precisely once, i.e.,
$$q_j:=\bigoplus_{\alpha=1}^{m_j}p_{I_\alpha^j}$$
$$\mbox{where }|I_\alpha^j|=j,\ I_\alpha^j\cap I_\beta^j=\emptyset\ \mbox{ for $\alpha\neq \beta$, and}\bigcup_{\alpha=1}^{m_j}I_\alpha=\left ( \mbox{last $jm_j$ coordinates of $X_j$}\right ).$$ 
Set $Q_j:=\varphi_{\infty,j}(q_j)$.

The multiplier projection in $\mathcal{M}(A\otimes \KK)$ we are looking for is given by the infinite direct sum
$$Q:=\bigoplus_{j=1}^\infty Q_j.$$
With this definition of $Q$, the algebra $B$ in the statement of the theorem is given, as we shall now see, by
$$B:=Q(A\otimes \KK)Q.$$
$B$ is separable, simple and exact, because these properties pass to hereditary subalgebras. Also, $B$ is non-unital, with $\left \{Q_1\oplus Q_2\oplus\ldots \oplus Q_j\right \}_{j\in\NN}$ as an approximate unit of projections.\\

Let $e_j\in A_j$ be a trivial 1-dimensional projection, and let $E:=\varphi_{\infty,1}(e_1)$. Then, according to multiplicities of the connecting maps $\varphi_j$, we get that $\varphi_{j,1}(e_1)\sim m_j\cdot e_j$. (Recall that we write $m_j\cdot e_j$ for $e_j\otimes \mathbbm{1}_{m_j}$.)\\

We will now study the comparison of the projections $Q_j=\varphi_{\infty,j}(q_j)$ with the projection $E=\varphi_{\infty,1}(e_1)$ in $A$: By definition of the projections $q_j$ in $A_j$ and by Proposition \ref{technicalresult} we have
$$(j+1)\cdot q_j\succeq m_j\cdot e_j=\varphi_{j,1}(e_1)\mbox{, and}$$
$$j\cdot q_j\nsucceq e_j.$$
It follows from the first result that
$$E=\varphi_{\infty,j}({\varphi_{j,1}}(e_1))\preceq \varphi_{\infty,j}((j+1)\cdot q_j)=(j+1)\cdot Q_j.$$
Define further for $i\geq j$,$$f_{i,j}:=\varphi_{i,1}(q_1)\oplus\varphi_{i,2}(q_2)\oplus\ldots\oplus\varphi_{i,j}(q_j)\in A_i.$$ 
Then $f_{i,j}=\varphi_{i,j}(f_{j,j})$ and 
$$\varphi_{\infty,i}(f_{i,j})=Q_1\oplus Q_2\oplus\ldots \oplus Q_j.$$
Let us further investigate the  projections $f_{i,j}\in A_i$ and $f_{j,j}\in A_j$. To start with $f_{j,j}$, notice that
$$\varphi_{j,1}(q_1)\sim \bigoplus_{\alpha=1}^{k_{j,1}} p_{J_{j,\alpha}^1}\oplus \left (l_{j,1}\cdot e_j\right ),$$
where for any fixed $j$ the index sets $J_{j,\alpha}^1$ are pairwise disjoint and of cardinality $|J_{j,\alpha}^1|=1$. Further, 
$$\varphi_{j,2}(q_2)\sim \bigoplus_{\alpha=1}^{m_2k_{j,2}} p_{J_{j,\alpha}^2}\oplus \left (m_2l_{j,2}\cdot e_j\right )$$
with $|J_{j,\alpha}^2|=2$, and for each fixed $j$ the index sets $J_{j,\alpha}^1,J_{j,\beta}^2$ are pairwise disjoint.
Accordingly, for $3\leq s\leq j$:
$$\varphi_{j,s}(q_s)\sim \bigoplus_{\alpha=1}^{m_sk_{j,s}} p_{J_{j,\alpha}^s}\oplus \left (m_sl_{j,s}\cdot e_j\right ),$$
with $|J_{j,\alpha}^s|=s$, and for fixed $j$ the index sets $J_{j,{\alpha_1}}^1,J_{j,{\alpha_2}}^2,\ldots,J_{j,{\alpha_s}}^s$ are pairwise disjoint.

It follows that 
$$f_{j,j}\sim\left ( \bigoplus_{s=1}^j\left ( \bigoplus_{\alpha=1}^{m_sk_{j,s}} p_{J_{j,\alpha}^s}\right )\right ) \oplus \left ( \bigoplus_{s=1}^j \left (m_sl_{j,s}\cdot e_j\right )\right ).$$
Since the numbers $k_{i,j}$ tell us the number of direct summands coming from coordinate projections of $\varphi_{i,j}$ and the $l_{i,j}$ tell us about the number of point evaluations of $\varphi_{i,j}$ we get more generally for any $i\geq j:$
$$f_{i,j}\sim \left (\bigoplus_{s=1}^j\left ( \bigoplus_{\alpha=1}^{m_sk_{i,s}} p_{J_{i,\alpha}^s}\right )\right ) \oplus \left ( \bigoplus_{s=1}^j \left (m_sl_{i,s}\cdot e_i\right )\right ).$$
In other words, for each $i\geq j$, the projection $f_{i,j}$ consists of a direct sum of $\left( \sum_{s=1}^jm_sl_{i,s}\right )$ trivial projections and a projection $\bigoplus_{s=1}^j r_s^i$ in $A_i$, where each summand $r_s^i$ is itself a direct sum of $(m_sk_{i,s})$ tensor products, each of $s$ Bott projections, such that all Bott projections involved come from distinct coordinates of $X_i$. In particular $e_i$ is not a subprojection of $\bigoplus_{s=1}^j r_s^i$ by Proposition \ref{technicalresult}.\\

To prove that no matrix algebra over the multiplier algebra of $B$ is properly infinite we will need to consider trivial subprojections of multiples of $f_{i,j}$ as well. Let $n\in\NN$. By Proposition \ref{technicalresult},
$$n\cdot f_{i,j}\succeq \left ( \left (\sum_{s=1}^jm_sk_{i,s} \left ( \max\{0,(n-s)\} \right ) \right )+\left (n \sum_{s=1}^j  m_sl_{i,s} \right )\right )\cdot e_i=:a_{i,j}\cdot e_i,$$
and by the same proposition this inequality is sharp -- that is, $n\cdot f_{i,j}\nsucceq (a_{i,j}+1)\cdot e_i$.\\

Let us show that for all $j\in \NN$:
 $$n\cdot (Q_1\oplus Q_2\oplus \ldots \oplus Q_j)\nsucceq M(n,R)\cdot E,$$
where $M(n,R)$ is the function of $n$ and $R$ given by
$$M(n,R)=\sum_{s=1}^{n}(n-s)+nR=\frac{n(n-1)}{2}+nR.$$
For $i\geq j$ we compute:
$$\frac{a_{i,j}}{m_i}=  \left ( \sum_{s=1}^j\frac{m_s}{m_i}k_{i,s} \left ( \max\{0,(n-s)\} \right ) \right )+\left (n\sum_{s=1}^j \frac{m_s}{m_i}l_{i,s}\right ) $$
$$=\sum_{s=1}^j\left [\frac{\prod_{r=1}^{s-1}(k_r+1)}{\prod_{r=1}^{i-1}(k_r+1)}\left (\prod_{r=s}^{i-1}k_r\right )\left ( \max\{0,n-s\}\right )\right ] \hspace{2cm}$$
$$ \hspace{2cm} +n\sum_{s=1}^j\left [\frac{\prod_{r=1}^{s-1}(k_r+1)}{\prod_{r=1}^{i-1}(k_r+1)}\left ( \prod_{r=s}^{i-1}(k_r+1)-\prod_{r=s}^{i-1}k_r  \right )\right ] $$
$$=\sum_{s=1}^j\left [\prod_{r=s}^{i-1}\frac{k_r}{k_r+1}\left ( \max\{0,n-s\}\right )\right ]+n\sum_{s=1}^j\left [\left ( 1-\prod_{r=s}^{i-1}\frac{k_r}{k_r+1}  \right )\right ]$$
$$<\sum_{s=1}^j\left[ \left ( \max\{0,n-s\}\right )\right ]+ n\sum_{s=1}^\infty\left [\left ( 1-\prod_{r=s}^{\infty}\frac{k_r}{k_r+1}  \right )\right ]$$
$$\leq \sum_{s=1}^{n}(n-s)+nR$$
$$=M(n,R).$$
It follows that
$$a_{i,j}<M(n,R)m_i\ \mbox{ for all $i$ and $j$ with $i\geq j$.}$$
This shows that 
$$n\cdot f_{i,j}\nsucceq M(n,R)\cdot \varphi_i(e_1)$$
in $A_i$ for all $i$ and $j$ with $i\geq j$. Hence,
$$n\cdot (Q_1\oplus Q_2\oplus \ldots \oplus Q_j)\nsucceq M(n,R)\cdot E$$
for all $j\in \NN$ by standard inductive limit arguments.\\

Next, let us verify that for $j\in\NN$ large enough and $n>R$,
$$2n\cdot (Q_1\oplus Q_2\oplus \ldots \oplus Q_j)\succeq M(n,R)\cdot E.$$

As above we can compute the number of trivial subprojections using Proposition \ref{technicalresult} and see that
$$2n\cdot f_{i,j}\succeq \left ( \left ( \sum_{s=1}^j m_sk_{i,s}\left ( \max\left \{0,2n-s\right \} \right )\right )+ \left (2n\sum_{s=1}^jm_sl_{i,s}\right ) \right ) \cdot e_i=:b_{i,j}\cdot e_i.$$
Consider the equation
$$\frac{b_{i,j}}{m_i}=\sum_{s=1}^j\left (\prod_{r=s}^{i-1}\frac{k_r}{k_r+1}\left ( \max\{0,2n-s\}\right )\right )+2n\sum_{s=1}^j\left (1-\prod_{r=s}^{i-1}\frac{k_r}{k_r+1}\right ).$$
If we can show that $\frac{b_{i,j}}{m_i}\geq M(n,R)$ for some $i,j\in\NN$, then 
$$2n\cdot f_{i,j}\succeq M(n,R)m_i\cdot e_i,\mbox{ and hence }2n\cdot (Q_1\oplus Q_2\oplus \ldots \oplus Q_j)\succeq M(n,R)\cdot E,$$
as desired. Therefore, we need only show that for large enough $i$ and $j$ the following inequality holds:
$$\sum_{s=1}^j\left (\prod_{r=s}^{i-1}\frac{k_r}{k_r+1}\left ( \max\{0,2n-s\}\right )\right )+2n\sum_{s=1}^j\left (1-\prod_{r=s}^{i-1}\frac{k_r}{k_r+1}\right )\geq M(n,R).\hspace{1cm}(*)$$

Choose $j\geq 2n$, and recall that we are only considering the case $n>R$. Then, for any $i\geq j$,

$$\sum_{s=1}^j\left (\prod_{r=s}^{i-1}\frac{k_r}{k_r+1}\left ( \max\{0,2n-s\}\right )\right )+2n\sum_{s=1}^j\left (1-\prod_{r=s}^{i-1}\frac{k_r}{k_r+1}\right )$$
$$\geq \sum_{s=1}^{2n-1}\left [\left (\prod_{r=s}^{i-1}\frac{k_r}{k_r+1}\left ( 2n-s\right )\right )+ 2n \left (1-\prod_{r=s}^{i-1}\frac{k_r}{k_r+1}\right )\right ]$$
$$= \sum_{s=1}^{2n-1}\left [ 2n -s\left (\prod_{r=s}^{i-1}\frac{k_r}{k_r+1}\right )\right ]$$
$$\geq \sum_{s=1}^{2n-1}\left ( 2n -s\right )=\frac{2n(2n-1)}{2}$$
$$=n(n+n-1)=\frac{n(n-1)}{2}+n\left ( \frac{n-1}{2} +n\right )$$
$$>\frac{n(n-1)}{2}+nR$$
$$= M(n,R).$$

This completes the proof of ($\ast$), and this again completes the proof of 
$$2n\cdot (Q_1\oplus Q_2\oplus \ldots \oplus Q_j)\succeq (M(n)+nR)\cdot E$$
whenever $n>R$ and $j\geq n$.\\


We can now prove that for no choice of $n\in\NN$ is the C*-algebra $$M_n(\mathcal{M}(B))\cong (n\cdot Q)\mathcal{M}(A\otimes\KK)(n\cdot Q)$$ properly infinite. In other words, the unit $n\cdot Q$ of this algebra is not properly infinite for any $n\in \NN$.

\hspace{0.4cm}It is enough to show that $n\cdot Q$ is not properly infinite for all $n>R$, as proper infiniteness of some multiple of $Q$ implies proper infiniteness of any higher multiple. 
 
For $n>R$ the projection $\left (2n\cdot Q\right )$ majorizes $M(n,R)$ copies of the projection $E$, but $\left (n\cdot Q\right )$ does not, so $\left (n\cdot Q\right )$ can not be properly infinite by Lemma \ref{notpicriteria}.\\


It only remains to show that $B$ has no non-zero bounded trace. So let $\tau$ be a non-zero trace on $B$. The trace $\tau$ must be faithful by simplicity of $B$. So $\tau(E)>0$.

The trace $\tau$ defines a trace on $A$, which we also denote by $\tau$. Then this trace defines a trace on the building block algebras $A_j$ by
$$\tau_j:=\tau\circ \varphi_{\infty , j}.$$
Since $\tau_j$ is a trace on the stabilization of a commutative algebra, and since $q_j$ and $\varphi_{j,1}(e_1)$ are of the same rank (equal to $m_j$), the trace $\tau_j$ agrees on these two projections. Hence
$$0<\tau(E)=\tau(\varphi_{\infty,j}(e_j))=\tau_j(\varphi_{j,1}(e_1))=\tau_j(q_j)=\tau(Q_j) \mbox{ for all $j$,}$$
and hence
$$\tau(Q_1\oplus Q_2\oplus\ldots \oplus Q_k)= k\cdot \tau(E).$$
Since for any $k\in \NN$ the projection $Q_1\oplus Q_2\oplus\ldots \oplus Q_k$ is an element of $B$, the trace $\tau$ is unbounded.

\end{proof}

\begin{corollary}\label{stablynonstableexample}
There exists a separable, simple, exact, non-unital C*-algebra $B$ such that $B$ has no non-zero bounded trace and such that no matrix algebra over $B$ is stable.
\end{corollary}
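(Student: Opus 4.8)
The plan is to obtain this as an immediate consequence of Theorem \ref{simpleexample}, using only the passage from stability of a C*-algebra to proper infiniteness of its multiplier algebra. Let $B$ be the C*-algebra constructed in Theorem \ref{simpleexample}. By that theorem $B$ is separable, simple, exact, non-unital, and has no non-zero bounded trace, which is everything the corollary demands except the statement that no matrix algebra over $B$ is stable. So that single assertion is all that remains to be established.

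First I would argue by contradiction: suppose $M_n(B)$ is stable for some $n\geq 1$. By R\o rdam's result that the multiplier algebra of a stable C*-algebra is properly infinite (\cite{R7}, Lemma 3.4), it follows that $\mathcal{M}(M_n(B))$ is properly infinite.

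Next I would invoke the standard identification $\mathcal{M}(M_n(B))\cong M_n(\mathcal{M}(B))$, already used in the proof of Theorem \ref{simpleexample}, to conclude that $M_n(\mathcal{M}(B))$ is properly infinite. But Theorem \ref{simpleexample} says precisely that no non-zero matrix algebra over $\mathcal{M}(B)$ is properly infinite, so this is a contradiction. Hence no matrix algebra over $B$ is stable, completing the proof.

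I do not expect any genuine obstacle here: all the substance sits inside Theorem \ref{simpleexample}, and the remaining ingredients --- the isomorphism $\mathcal{M}(M_n(B))\cong M_n(\mathcal{M}(B))$ and the fact that proper infiniteness of a multiplier unit is inherited by larger matrix amplifications --- are both routine and are already invoked in the proof of that theorem. The only thing worth flagging is that the argument must cover $n=1$ as well, i.e.\ it rules out stability of $B$ itself, which it does since the contradiction is derived uniformly for every $n\geq 1$.
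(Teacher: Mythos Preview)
Your proposal is correct and matches the paper's own proof essentially line for line: take $B$ from Theorem \ref{simpleexample}, assume $M_n(B)$ is stable for some $n$, invoke \cite{R7} Lemma 3.4 to get $\mathcal{M}(M_n(B))$ properly infinite, and contradict Theorem \ref{simpleexample}. The only difference is that you spell out the identification $\mathcal{M}(M_n(B))\cong M_n(\mathcal{M}(B))$ explicitly, which the paper leaves implicit.
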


\begin{proof}
Consider the C*-algebra $B$ of the previous theorem. If $M_n(B)$ were stable for some $n\in\NN$, then the unit of $\mathcal{M}(M_n(B))$ would be properly infinite (\cite{R7} Lemma 3.4), in contradiction to the previous theorem.
\end{proof}

For simple separable stably finite C*-algebras with the tracial corona factorization property, non-existence of a bounded trace implies the C*-algebra to be stable. The examples in \cite{R2} show that another obstruction to existence of boubded traces is that large enough matrix algebras are stable (while the algebra might not be stable itself). The result of Corollary \ref{stablynonstableexample} says that this is not the end of the story and that there are further obstructions to the existence of bounded traces for general C*-algebras.

{
\small
\bibliographystyle{ieeetr}

\end{document}